\documentclass[a4paper,11pt]{amsart}
\usepackage{amsthm,amssymb,latexsym, amsmath}
\input amssym.def

\title[Logarithmic mean oscillation on the polydisc and paraproducts]{Logarithmic mean oscillation on the polydisc, endpoint results for
multi-parameter paraproducts, and commutators on $\BMO$}
\newtheorem{theorem}{T{\hskip 0pt\footnotesize\bf HEOREM}}[section]
\newtheorem{lemma}[theorem]{L{\hskip 0pt\footnotesize\bf EMMA}}
\newtheorem{proposition}[theorem]{P{\hskip 0pt\footnotesize\bf ROPOSITION}}
\newtheorem{definition}[theorem]{D{\hskip 0pt\footnotesize\bf EFINITION}}
\newtheorem{corollary}[theorem]{C{\hskip 0pt\footnotesize\bf OROLLARY}}

\newtheorem{question}[theorem]{Q{\hskip 0pt\footnotesize\bf UESTION}}


\def\eins{\mathbf{1}}
\def\O{\Omega}

\def\D{\mathcal D}
\def\RR{\mathcal R}
\def\J{\mathcal J}

\def\ZZ{\mathbb{Z}}

\def\vda{\vec{\delta}}
\def\Da{\Delta}

\def\veps{\vec{\varepsilon}}

\def\vba{\vec{\beta}}

\def\vj{\vec{j}}
\def\vk{\vec{k}}
\newcommand{\supp}{\mathrm{supp}}

\def\BMO{\mathrm{BMO}}
\def\LMO{\mathrm{LMO}}

\newcommand{\bprop} {\begin{proposition}}
\newcommand{\eprop} {\end{proposition}}
\newcommand{\btheo} {\begin{theorem}}
\newcommand{\etheo} {\end{theorem}}
\newcommand{\blem} {\begin{lemma}}
\newcommand{\elem} {\end{lemma}}
\newcommand{\bcor} {\begin{corollary}}
\newcommand{\ecor} {\end{corollary}}

\newcommand{\Be}{\begin{equation}}
\newcommand{\Ee}{\end{equation}}
\newcommand{\Bea}{\begin{eqnarray}}
\newcommand{\Eea}{\end{eqnarray}}
\newcommand{\Bes}{\begin{equation*}}
\newcommand{\Ees}{\end{equation*}}
\newcommand{\Beas}{\begin{eqnarray*}}
\newcommand{\Eeas}{\end{eqnarray*}}
\newcommand{\Ba}{\begin{array}}
\newcommand{\Ea}{\end{array}}
\def\R{\mathbb{R}}

\def\N{\mathbb{N}}

\def\oN{\overline{N}}
\def\uN{\underline{N}}
\def\oK{\overline{K}}
\def\uK{\underline{K}}

\def\T{\mathbb{T}}
 \scrollmode

\begin{document}

\author{Sandra Pott
and Benoit Sehba}
\address{Sandra Pott,
 Centre for Mathematical Sciences, Faculty of Science, Lund University, 22100 Lund, Sweden}
\email{sandra.pott@math.lu.se}
\address{Beno\^it Sehba, School of Mathematics, Trinity College, Dublin 2, Ireland.}
\email{sehbab@tcd.ie}

\keywords{Paraproduct, Haar basis, bounded mean oscillation, logarithmic mean oscillation.}
\subjclass[2000]{Primary: 42B30, 42B37, Secondary: 42B20 }

\thanks{The first author was supported by a Heisenberg fellowship of the German Research Foundation (DFG), the second
author acknowledges funding from the ``Irish
Research Council for Science, Engineering and Technology". }
\begin{abstract}
We study boundedness properties of a class of  multiparameter paraproducts on the dual space of the dyadic Hardy space
$H^1_d(\T^N)$, the dyadic product $\BMO$ space $\BMO^d(\T^N)$. For this, we introduce a notion of logarithmic mean oscillation on the polydisc.
We also obtain a result on the boundedness of iterated commutators on $\BMO([0,1]^2)$.

\end{abstract}
\maketitle
\section{Introduction and notation}
In recent years, multi-parameter paraproducts have generated much interest \cite{bp,ferglac, laceyjason, muscalu},
both in their own right
and as building blocks for other operators, such as commutators and Hankel operators.

In this paper, we characterize boundedness of dyadic paraproducts on the
endpoint spaces  $\BMO^d(\T^N)$ and $H^1_d(\T^N)$.
Here, the spaces $H^1(\R^N)$ and $\BMO(\R^N)$ and their dyadic counterparts $\BMO^d(\T^N)$ and $H^1_d(\T^N)$ on the polydisc
are the product spaces in the sense of Chang and Fefferman \cite{ChFef2}.

Our main interest will be for the paraproduct denoted below
by $\Pi$ on the space $\BMO^d(\T^N)$. We will  prove a characterization of boundedness in terms of a natural notion of logarithmic mean oscillation in the polydisc.

We then apply the results on paraproducts to obtain a result on the boundedness of iterated commutators with the Hilbert transforms on compactly supported functions in
$\BMO(\R^2)$. This is motivated by the classical  one-parameter
results in \cite{JPS} on Hankel operators, or equivalently commutators with the Hilbert transform, on $\BMO(\T)$, and by the more recent results of Ferguson, Lacey and
Terwilleger on iterated commutators with the Hilbert transforms on $L^2(\R^N)$, see  \cite{ferglac, laceyterw}.

The notion of logarithmic mean oscillation was originally introduced in the one-parameter setting
for the characterization of multipliers of $\BMO$ and Toeplitz operators on $H^1$ \cite{stegenga, zhao}. The corresponding multiparameter results, which rely on our results here,
 are the subject of a forthcoming paper
\cite{ pottsehba2}.

The paraproduct denoted by $\Delta$ below and continuous analogues have been considered on $\BMO^d(\T^N)$ and $\BMO(\T^N)$ before,
see \cite{bp, laceyjason}. We restrict most of our presentation to the two-dimensional case. As the general case  follows in
the same way, we will just give the corresponding results.

\vspace{0.5cm}
The paper is organised as follows. In Section \ref{sec:main}, we prove the main technical results on the paraproduct $\Pi$. In Section \ref{sec:others}, we give conditions on the boundedness of
the other paraproducts. The general $N$-parameter case is treated in Section \ref{sec:multi}. In Section \ref{sec:hankel}, we first consider paraproducts of functions on $\R^N$
rather than $\T^N$. For this, local versions
of the results of Sections \ref{sec:main} and \ref{sec:others} are required. These results are then used to prove boundedness estimates for commutators with the so-called dyadic shift on product
$\BMO^d$. These  in turn lead to a result on the boundedness
of iterated commutators with the Hilbert transforms on a suitable product $\BMO$ space,
by means of
 the decomposition of the Hilbert transform into dyadic shifts and new results on the relation between dyadic and continuous product $\BMO$ spaces.

 \subsection*{Notation}
Let $\mathbb {T}$ denote the unit circle. We identify $\T$ with the interval $[0,1)$ in the usual way and write $\mathcal D$
for the set of all dyadic subintervals. We denote by
$\mathcal R$ the set of all dyadic rectangles $R=I\times J$, where $I$
and $J$ in $\mathcal D$. Let $h_I$ denote the Haar wavelet adapted
to the dyadic interval $I$,
$$
h_I=|I|^{-1/2}(\chi_{I^+}-\chi_{I^-}),
$$
where $I^+$ and $I^-$
are the right and left halves of $I$, respectively.

For any rectangle $R\in \mathcal {R}$, the product Haar wavelet
adapted to $R=I\times J = h_I \otimes h_J$ is defined by $h_R(s,t)=h_I(s)h_J(t)$.
These wavelets form an orthonormal basis of
$$
L_0^2(\mathbb
{T}^2) = \left\{ f \in L^2(\T^2):  \int_\T f(s,t) dt =0, \int_\T f(s, t) ds =0 \text{ for a.e. }s,t \in \T \right\},
$$
with
$$f=\sum_{R\in \mathcal {R}} \langle f,h_R \rangle h_R=\sum_{R\in \mathcal
{R}}f_Rh_R \qquad (f \in L^2_0(\T^2)).
$$
We will be writing $m_Rf$ for the mean of $f\in
L^2(\mathbb {T}^2)$ over the dyadic rectangle $R= I \times J$ and $f_R= f_{IJ}$ for the Haar coefficient $\langle f, h_R \rangle = \langle f, h_I \otimes h_J \rangle   $.

The space of functions of dyadic bounded mean oscillations in $\mathbb T^2$,
$\BMO^d(\mathbb T^2)$, is the space of all function $b\in L_0^2(\mathbb
T^2)$ such that
\begin{equation}
 ||b||_{\BMO^d}^2:=\sup_{\O\subset \mathbb
{T}^2}\frac{1}{|\O|}\sum_{R\in \O}|b_R|^2=\sup_{\O\subset \mathbb
{T}^2}\frac{1}{|\O|}||P_\O b||_2^2 < \infty,
\end{equation}
where the supremum
is taken over all open sets $\O\subset \mathbb T^2$ and $P_\O$ is the
orthogonal projection on the subspace spanned by Haar functions
$h_R$, $R\in \mathcal R$ and $R\subset  \O$.

It is well-known (see e.g.
\cite{ChFef1}, \cite{bp}) that $\BMO^d(\mathbb T^2)$ is the dual space of the
dyadic product Hardy space $H^{1}_d(\mathbb T^2)$ defined in terms of the
dyadic square functions $S$.

That means,
$$
H_d^{1}(\mathbb T^2)=\{f\in L_0^1(\mathbb T^2): S[f]\in
L^1(\mathbb T^2)\},
$$
where
\begin{equation}   \label{eq:sq}
     S[f] = \left(\sum_{R \in \RR}  \frac{\chi_R}{|R|} |f_R|^2 \right)^{1/2}.
\end{equation}

For $I$ a dyadic interval and $\varepsilon\in \{0,1\}$, we define $h^{\varepsilon}_I$ by
$$h^{\varepsilon}_I =\left\{ \begin{matrix} h_I &\text{if }& \varepsilon=0\\
      |I|^{-1/2}|h_I| & \text{ if } & \varepsilon=1
                                  \end{matrix} \right.
$$

For $R=I\times J\in \RR$ and $\vec {\varepsilon}=(\varepsilon_1,\varepsilon_2)$, with $\varepsilon_j\in \{0,1\}$, we write
$$
h^{\vec {\varepsilon}}_R=h^{\varepsilon_1}_I \otimes h^{\varepsilon_2}_J.
$$
We will consider operators of the following general form:
\begin{equation}\label{paraprodgene2}B_{\veps,\vda,\vba}(\phi,f):=\sum_{R\in \RR}\langle \phi,h^{\vec {\varepsilon}}_R\rangle \langle f,h^{\vec {\delta}}_R\rangle h^{\vec {\beta}}_R.\end{equation}

They appear naturally in the study of many other operators in complex analysis and harmonic analysis. In this note, we consider the paraproducts
appearing as pieces of the usual product in the Haar expansion and corresponding to non-diagonal terms in this expansion. Some of the other operators of the form given in
(\ref{paraprodgene2})
on endpoint spaces appear in \cite{pottsehba2}.

 In other words, we consider here paraproducts
$B_{\veps,\vda,\vba}(\phi, \cdot)$ with symbol $\phi$ corresponding to triples $(\veps, \vda, \vba)$ with $\vec {\varepsilon}=(0,0)$ and
$$
\delta_j=\left\{ \begin{matrix} 1 &\text{if}& \beta_j=0\\

                0 &\text{otherwise}&.
                              \end{matrix}\right.
$$
Finally, for simplicity, we can just denote the corresponding paraproducts by $\Pi^{\vec {\beta}}$. One easily sees that there are exactly four in dimension $N=2$. We will occasionally use the notation
$\vec{1}=(1,1)$, $\vec{0} = (0,0)$.

\vspace{0.5cm}

As usual, for $\vj =(j_1, j_2) \in \N_0 \times \N_0$ we define the $j_1$th generation of dyadic intervals and the $\vj$th generation of dyadic rectangles,
$$
     \D_{j_1} = \{I \in \D: |I|= 2^{-j_1} \},
$$
$$
    \RR_{\vj} = \D_{j_1} \times \D_{j_2} = \{ I \times J \in \RR: |I|= 2^{-j_1}, |J|=2^{-j_2}\},
$$
%
the product Haar martingale difference,
$$
   \Da_{\vj} f = \sum_{R\in \RR_{\vj}} \langle  f, h_R \rangle h_R,
$$
the expectations
$$
   E_{\vj} f = \sum_{\vk \in \N_0 \times \N_0,  \vk < \vj } \Da_{\vk}f,
$$
where we write $(k_1, k_2) = \vk < \vj = (j_1, j_2)$ for $k_1 < j_1$, $k_2 < j_2$ and correspondingly $(k_1, k_2) = \vk \le \vj = (j_1, j_2)$ for $k_1 \le j_1$, $k_2 \le j_2$,
$$
   E^{(1)}_{i} f = \sum_{\vk  \in \N_0 \times \N_0,  k_1 < i} \Da_{\vk}f,
$$
and
$$
   E^{(2)}_{j} f = \sum_{ \vk  \in \N_0 \times \N_0,    k_2 < j} \Da_{\vk}f,
$$
for $f \in L^2(\T)$, $\vj \in \N_0 \times \N_0$, $i,j \in \N_0$.

We will also require the operators on $L^2(\T^2)$ given by
\begin{equation}   \label{eq:qdef}
   Q_{\vj}f =\sum_{ \vk \ge \vj } \Da_{\vk}f
\end{equation}
$$
   Q^{(1)}_{i}f =\sum_{ \vk  \in \N_0 \times \N_0, k_1 \ge i} \Da_{\vk}f
$$
$$
   Q^{(2)}_{j}f =\sum_{ \vk  \in \N_0 \times \N_0, k_2 \ge j} \Da_{\vk}f.
$$
Note that contrary to the one-parameter situation, $Q_{\vk}$ is not the orthogonal complement of the expectation $E_{\vk}$. In fact, we have the relation
\begin{equation}    \label{eq:EQ}
        f = E_{\vk} f +  E^{(1)}_{k_1} Q^{(2)}_{k_2} f +Q^{(1)}_{k_1} E^{(2)}_{k_2} f  + Q_{\vk} f \text{ for } \vk \in \N_0 \times \N_0, \; f \in L^2(\T^2).
\end{equation}
Let $\phi \in L^2(\T^2)$. The (main) paraproduct $\Pi_\phi$ is defined by
$$
   \Pi_\phi f = \Pi(\phi,f):= \sum_{\vj \in \N_0 \times \N_0}
(\Delta_{\vj} \phi) (E_{\vj} f) = \sum_{R\in \RR} h_R \phi_R m_R f
$$
on functions with finite Haar expansion. This is just the paraproduct $\Pi^{(0,0)}$ introduced above.

We will now define the space of functions of dyadic logarithmic mean oscillation on the bidisc, $\LMO^d(\T^2)$.
\begin{definition}   \label{def:lmod}
Let $\phi \in L^2(\T^2)$. We say that $ \phi \in \LMO^d(\T^2)$, if there exists $C >0$ with
$$
    \|Q_{\vj} \phi\|_{\BMO^d(\T^2)} \le C \frac{1}{(j_1 +1)(j_2+1)}
$$
for all $\vj = (j_1, j_2)  \in \N_0 \times \N_0$. The infinimum of such constants is denoted by $\|\phi\|_{\LMO^d}$.
\end{definition}

An alternative characterization, which is closer in spirit to the one-parameter case, is the following:
\begin{proposition} \label{prop:char}
Let $\phi \in L^2(\T^2)$. Then $ \phi \in \LMO^d(\T^2)$, if and only if there exists $C >0$ such that for each
dyadic rectangle $R= I \times J$ and each open set $\Omega \subseteq R$,
\begin{equation}   \label{eq:lmochar}
   \frac{(\log\frac{4}{|I|})^2 (\log\frac{4}{|J|})^2}{|\Omega|} \sum_{Q \in \RR, Q \subseteq \Omega} |\phi_Q|^2 \le C.
\end{equation}
\end{proposition}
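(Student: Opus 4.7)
The plan is to prove the two implications separately by matching the $\BMO^d$-condition on $Q_{\vj}\phi$ with the scale-restricted square sum in \eqref{eq:lmochar}, using the fact that $\log(4/2^{-j})$ is comparable to $j+1$.

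For the easy direction, I would assume $\phi \in \LMO^d(\T^2)$ and pick a dyadic $R = I \times J$ with $|I| = 2^{-j_1}$, $|J| = 2^{-j_2}$, and an open $\Omega \subseteq R$. Any dyadic subrectangle $Q \subseteq \Omega$ automatically satisfies $Q \subseteq R$, so its side-lengths are bounded above by those of $R$. Hence the Haar coefficients $\phi_Q$ appearing in the sum over $Q \subseteq \Omega$ are exactly the Haar coefficients of $Q_{\vj}\phi$ supported in $\Omega$. By the BMO characterization via open sets,
$$
  \sum_{Q \subseteq \Omega} |\phi_Q|^2 = \|P_{\Omega} Q_{\vj}\phi\|_2^2 \le |\Omega|\,\|Q_{\vj}\phi\|_{\BMO^d}^2 \le \frac{C^2 |\Omega|}{((j_1+1)(j_2+1))^2},
$$
and dividing and using $(j_k+1) \asymp \log(4/2^{-j_k})$ yields \eqref{eq:lmochar}.

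For the converse, assume \eqref{eq:lmochar} and fix $\vj = (j_1,j_2)$; I need to bound $\|Q_{\vj}\phi\|_{\BMO^d}$. Given an arbitrary open $\Omega' \subseteq \T^2$, the key step is to group the dyadic subrectangles $Q \subseteq \Omega'$ with $|Q_1|\le 2^{-j_1}$ and $|Q_2| \le 2^{-j_2}$ according to the unique dyadic rectangle $R$ of size exactly $2^{-j_1} \times 2^{-j_2}$ that contains $Q$. Since such rectangles $R$ partition $\T^2$, this decomposition gives
$$
  \sum_{\substack{Q \subseteq \Omega' \\ |Q_1|\le 2^{-j_1},\, |Q_2|\le 2^{-j_2}}} |\phi_Q|^2 = \sum_{R} \sum_{Q \subseteq \Omega' \cap R} |\phi_Q|^2,
$$
where the outer sum is over all dyadic $R$ of the prescribed size. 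Applying \eqref{eq:lmochar} to each open set $\Omega' \cap R \subseteq R$ produces the bound
$$
  \sum_{Q \subseteq \Omega' \cap R} |\phi_Q|^2 \le \frac{C\,|\Omega' \cap R|}{(\log(4/2^{-j_1}))^2(\log(4/2^{-j_2}))^2},
$$
and summing over $R$ telescopes $|\Omega' \cap R|$ into $|\Omega'|$. Dividing by $|\Omega'|$ and replacing the logarithms by their comparable linear expressions yields $\|Q_{\vj}\phi\|_{\BMO^d} \lesssim 1/((j_1+1)(j_2+1))$, which is the $\LMO^d$ condition.

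The only mildly delicate point is handling $\Omega' \cap R$ in the second direction: it is open in $\T^2$ only up to the measure-zero boundary of $R$, so either one passes to its interior (which does not affect the square sum of Haar coefficients) or notes that \eqref{eq:lmochar} extends from open subsets of $R$ to measurable ones by approximation. Beyond this bookkeeping, both directions reduce to matching the two-parameter Carleson condition with the LMO definition by exchanging the $\log(4/|I|)\log(4/|J|)$ penalty for the $(j_1+1)(j_2+1)$ penalty, and no other technical ideas are required.
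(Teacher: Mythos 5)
Your proof is correct and follows essentially the same route as the paper: the forward direction uses $P_\Omega \phi = P_\Omega Q_{\vj}\phi$ for $\Omega \subseteq R$ together with the open-set definition of $\BMO^d$, and the converse is exactly the paper's decomposition $\|P_{\Omega'} Q_{\vj}\phi\|_2^2 = \sum_{R \in \RR_{\vj}} \|P_{\Omega' \cap R} Q_{\vj}\phi\|_2^2$ followed by applying \eqref{eq:lmochar} on each $\Omega' \cap R$ and summing. Your remark about $\Omega' \cap R$ not being literally open is a measure-zero bookkeeping point the paper silently ignores, and your handling of it is fine.
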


\proof Let $\phi \in \LMO^d(\T^2)$ in the sense of Definition \ref{def:lmod},  let $R= I \times J$ be a dyadic rectangle with $|I| = 2^{-j_1}$, $|J| = 2^{-j_2}$,
 and let $\Omega \subseteq R$ be open. Let $\vj = (j_1, j_2)$. Then
 \begin{multline*}
      \sum_{Q \in \RR, Q \subseteq \Omega} |\phi_Q|^2 = \|P_\Omega \phi \|_2^2 = \|P_\Omega Q_{\vj} \phi \|_2^2 \le |\Omega| \| Q_{\vj} \phi \|^2_{\BMO^d}    \\
      \le
       |\Omega| \frac{1}{(j_1 +1)^2 (j_2 +1)^2}    \| \phi \|^2_{\LMO^d} \lesssim  (\log\frac{4}{|I|})^{-2} (\log\frac{4}{|J|})^{-2} |\O|  \| \phi \|^2_{\LMO^d}.
 \end{multline*}

 Conversely, suppose that $\phi \in L^2(\T^2)$ and that (\ref{eq:lmochar}) holds. Let $\vj = (j_1, j_2) \in \N_0^2$, and let $\Omega \subseteq \T^2$ open.
Then
\begin{multline*}
   \| P_\Omega Q_{\vj} \phi \|_2^2 = \sum_{R \in \RR_j}        \| P_{R \cap \Omega} Q_{\vj} \phi \|_2^2 \\
       \lesssim C \frac{1}{(j_1+1)^2} \frac{1}{(j_2+1)^2}  \sum_{R \in \RR_j}  |R \cap \Omega|  = C \frac{1}{(j_1 +1)^2 (j_2 +1)^2} |\O|.
\end{multline*}
This holds for all $\Omega \subseteq \T^2$ open, hence $\|Q_{\vj} \phi \|_{\BMO^d} \lesssim \frac{1}{(j_1 +1)(j_2 +1)}$.
\qed

\section{The main paraproduct}     \label{sec:main}
Here is our main result of this section.
\begin{theorem}   \label{thm:main}
Let $\phi \in L^2(\T^2)$. Then $\phi \in \LMO^d(\T^2)$, if and only if $\Pi_\phi:\BMO^d(\T^2) \rightarrow \BMO^d(\T^2)$ is bounded, and
$\|\Pi_\phi\|_{\BMO \to \BMO} \approx \|\phi\|_{\LMO^d}$.
\end{theorem}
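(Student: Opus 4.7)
The plan is to recast the problem via the Carleson-type identity
\[
\|\Pi_\phi f\|_{\BMO^d}^2 = \sup_{\Omega \subset \T^2\text{ open}} \frac{1}{|\Omega|}\sum_{R \subset \Omega} |\phi_R|^2\,|m_R f|^2,
\]
so that both directions of the theorem reduce to proving matching two-sided bounds on this Carleson sum by $|\Omega|\,\|\phi\|_{\LMO^d}^2\,\|f\|_{\BMO^d}^2$.

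For the sufficiency direction ($\phi\in\LMO^d \Rightarrow \Pi_\phi$ bounded), I would fix an open $\Omega$ and decompose the sum over $R$ by the level $\vj = (j_1,j_2)$ of $R$. On each level the key tool is the explicit expansion
\[
m_R f = \sum_{\vk < \vj} \epsilon_{R,\vk}\,\frac{f_{\mathrm{anc}_\vk(R)}}{|\mathrm{anc}_\vk(R)|^{1/2}},
\]
where $\mathrm{anc}_\vk(R)\in\RR_\vk$ is the ancestor of $R$ at level $\vk$, which after substitution allows one to swap the order of summation and group by a fixed ancestor $R''$. For each $\vj$ and $R''$ the characterization of Proposition \ref{prop:char} yields
\[
\sum_{R \in \RR_\vj,\ R \subset R'' \cap \Omega} |\phi_R|^2 \lesssim \frac{|R''\cap\Omega|\,\|\phi\|_{\LMO^d}^2}{(j_1+1)^2(j_2+1)^2},
\]
and standard $\BMO^d$-Carleson estimates on $\{|f_{R''}|^2\}$ control the remaining sum over ancestors. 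The main obstacle is balancing the logarithmic factors introduced when writing $m_R f$ via Cauchy-Schwarz on the ancestor expansion against the $((j_1+1)(j_2+1))^{-2}$-decay from the $\LMO$ condition on $\phi$: a naive application produces a divergent series in $\vj$. I would try to fix this by a weighted Cauchy-Schwarz with weights of the form $(k_1+1)^{-1-\eps}(k_2+1)^{-1-\eps}$ followed by Abel-style rearrangement in $\vk$, so that the summable $\eps$-correction absorbs the log-losses. An alternative route, which I expect may be cleaner, is via $H^1_d$--$\BMO^d$ duality: test $\langle\Pi_\phi f,g\rangle$ against $g \in H^1_d$, apply pointwise Cauchy-Schwarz on the square functions of $\phi$ and $g$, and exploit that the LMO-decay of $\phi$ cancels the growth of the strong dyadic maximal function of $f$.

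For necessity, given an open $\Omega$ and $\vj$, I would construct a test function $f_{\Omega,\vj}\in\BMO^d$ with $\|f_{\Omega,\vj}\|_{\BMO^d}\lesssim 1$ and $|m_R f_{\Omega,\vj}|\gtrsim (j_1+1)(j_2+1)$ on a substantial fraction of rectangles $R \subset \Omega$ of level $\vj$. A natural candidate is a tensor product $f_{\Omega,\vj} = g\otimes h$ of one-parameter ``dyadic logarithms'' of the form $g = \sum_{k < j_1} 2^{-k/2}\sum_{K \in \D_k}\epsilon_K h_K$, with signs $\epsilon_K$ arranged via a Riesz-product construction so that $|m_I g|\gtrsim j_1$ uniformly for most $I \in \D_{j_1}$, and analogously for $h$. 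Plugging this into $\|\Pi_\phi f_{\Omega,\vj}\|_{\BMO^d} \le \|\Pi_\phi\|\,\|f_{\Omega,\vj}\|_{\BMO^d}$ then yields
\[
\sum_{R \subset \Omega,\ \mathrm{level}(R) \ge \vj}|\phi_R|^2 \lesssim \frac{|\Omega|\,\|\Pi_\phi\|^2}{((j_1+1)(j_2+1))^2},
\]
which is exactly $\|Q_\vj\phi\|_{\BMO^d}\lesssim \|\Pi_\phi\|/((j_1+1)(j_2+1))$, i.e.\ $\phi\in\LMO^d$. The main subtlety on this side is that the chain-based one-parameter construction hits only rectangles along a single tower; adapting it to hit most rectangles in an arbitrary open set $\Omega$ of the polydisc will likely require a Journ\'e-type covering argument to reduce to the rectangular case, or a randomization over the sign choice.
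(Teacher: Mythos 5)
Your reduction of both directions to the Carleson-type quantity $\sup_\Omega \frac{1}{|\Omega|}\sum_{R\subset\Omega}|\phi_R|^2|m_Rf|^2$ is the right starting point, but the sufficiency direction as proposed has a genuine gap, and you have in fact located it yourself: the problem is exactly borderline, and no $\eps$-tweak of Cauchy--Schwarz weights can repair it. If you weight the ancestor expansion of $m_Rf$ with $(k_1+1)^{-1-\eps}(k_2+1)^{-1-\eps}$, then after swapping sums and applying the log-Carleson bound of Proposition \ref{prop:char} to each ancestor $R''\in\RR_{\vk}$ you are left with needing $\sum_{\vk}(k_1+1)^{-1+\eps}(k_2+1)^{-1+\eps}\int_\Omega|\Da_{\vk}f|^2\lesssim |\Omega|\,\|f\|^2_{\BMO^d}$; but the only uniform information available is $\int_\Omega|\Da_{\vk}f|^2\le|\Omega|\,\|f\|^2_{\BMO^d}$ (and this is attained, already in one parameter, by lacunary $\BMO$ functions built on ancestors of a small set $\Omega$), so the series still diverges like $(\log)^{\eps}$ -- Abel summation does not help because the partial sums $\sum_{\vk\le\vK}\int_\Omega|\Da_{\vk}f|^2$ genuinely grow like $(K_1+1)(K_2+1)|\Omega|$. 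The paper circumvents this critical loss not by summing a positive series but by exploiting orthogonality: it rewrites $\|\Pi_\phi b\|_{\BMO^d}\approx\|\Pi(\Pi(\phi,b),\cdot)\|_{L^2\to L^2}$, proves the two-sided block estimate $\|\Pi(\Pi(Q_{\vj}\phi,b),E_{\vk}\,\cdot)\|_{L^2\to L^2}\lesssim\frac{(k_1+1)(k_2+1)}{(j_1+1)(j_2+1)}\|\phi\|_{\LMO^d}\|b\|_{\BMO^d}$ (Lemmas \ref{lemma:avgrowth}--\ref{lemma:core2bis}, via the symbol transformation $\sigma_{\vk}$), and then sums the dyadic-in-scale blocks $T_{N,K}$ by Cotlar's lemma, where the ratio of log gain to log loss becomes geometric decay $2^{-|N-N'|}2^{-|K-K'|}$ in the $T^*T$ cross terms. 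Your alternative ``$H^1$--$\BMO$ duality plus square functions'' route is only a sentence and faces the identical borderline issue, so as it stands the forward implication is not proved.

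For necessity you are over-engineering, and the extra machinery (Riesz-product sign choices, largeness on ``most'' rectangles, a Journ\'e covering or randomization) is both incomplete as stated and unnecessary: you do not need to treat arbitrary open $\Omega\subseteq\T^2$ with a single test function. By Proposition \ref{prop:char} it suffices to test open sets $\Omega$ contained in a fixed dyadic rectangle $R=I\times J$ with $|I|=2^{-k}$, $|J|=2^{-l}$, and then one simply takes a tensor product $b=b_1\otimes b_2$ of one-parameter functions with $b\equiv(k+1)(l+1)$ on $R$ and $\|b\|_{\BMO^d}\le C$; since $b$ is constant on $R$, every $Q\subseteq\Omega$ satisfies $m_Qb=(k+1)(l+1)$ exactly, and $\frac{1}{|\Omega|}\sum_{Q\subseteq\Omega}|\phi_Q|^2|m_Qb|^2\le\|\Pi_\phi b\|^2_{\BMO^d}\lesssim\|\Pi_\phi\|^2$ gives the log-Carleson condition directly, with no covering argument. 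So the backward direction is salvageable with a much simpler construction, but the forward direction needs the almost-orthogonality (or some equivalent substitute), which your proposal does not supply.
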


Let us introduce some more notations. Given an integrable function $f$ on $\T^2$ and intervals $I$ and $J$ in $\T$. We write
$m_If=\frac{1}{|I|}\int_If(s,t)ds$, $m_Jf=\frac{1}{|J|}\int_Jf(s,t)dt$ and, $m_Rf=\frac{1}{|R|}\int_Rf(s,t)dsdt$,
$R=I\times J$. We remark that $m_If$ is in fact a function of the second variable while $m_Jf$ is a function in the first
variable. We will require the following lemma on the growth of averages and restrictions of functions in $\BMO$.
\begin{lemma}  \label{lemma:avgrowth}  Let $b \in \BMO^d(\T^2)$, $\vk = (k_1, k_2) \in \N_0 \times \N_0$. Then
$$
  |m_R b| \lesssim (k_1+1)(k_2 +1) \|b\|_{\BMO^d(\T^2)} \quad  (R \in \RR_{\vk});
$$
$$
  \|m_I b\|_{\BMO^d(\T)} \lesssim (k_1+1) \|b\|_{\BMO^d(\T^2)} \quad  (I \in \D_{k_1});
$$
$$
    \|\chi_R b \|_2^2 \lesssim (k_1+1)^2(k_2+1)^2 |R| \|b\|^2_{\BMO^d(\T^2)} \quad   (R \in \RR_{\vk});
$$
$$
    \|\chi_I P_J b \|_2^2 \lesssim (k_1+1)^2 |I| |J|  \|b\|^2_{\BMO^d(\T^2)} \quad   (R \in \RR_{\vk});
$$
and this is sharp.
\end{lemma}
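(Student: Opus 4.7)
The four estimates form a hierarchy centred on (2). Estimate (1) follows by iterating (2) in the two coordinates; (3) and (4) are obtained by splitting $\chi_R b$ and $\chi_I P_J b$ into pieces to which (1), (2), or the raw product-$\BMO^d$ definition applies.

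To prove $\|m_I b\|_{\BMO^d(\T)}\lesssim(k_1+1)\|b\|_{\BMO^d(\T^2)}$, expand $b$ in the product Haar basis. Since $\int_I h_{I'}=0$ unless $I'\supsetneq I$, in which case $m_I h_{I'}=\pm|I'|^{-1/2}$, one has
\[
m_I b(t)=\sum_{I'\supsetneq I}(m_I h_{I'})\,b_{I'}(t),\qquad b_{I'}(t):=\sum_{J'}b_{I'J'}h_{J'}(t).
\]
Applying the product-$\BMO^d$ hypothesis to the test sets $\Omega=I'\times\omega$ (with $\omega\subseteq\T$ open and arbitrary) yields $\|b_{I'}\|_{\BMO^d(\T)}\le|I'|^{1/2}\|b\|_{\BMO^d(\T^2)}$. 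Since $I$ has exactly $k_1$ strict dyadic ancestors $I'$ and the factors $|I'|^{\pm 1/2}$ cancel, the triangle inequality gives (2). Estimate (1) then follows from $m_R b=m_J(m_I b)$ combined with the elementary one-parameter bound $|m_J g|\lesssim(k_2+1)\|g\|_{\BMO^d(\T)}$ for $g\in L_0^2(\T)$, which is proved by the same telescoping together with the fact that $|g_{J'}|\le|J'|^{1/2}\|g\|_{\BMO^d(\T)}$.

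For (3), split $b$ into four parts according to the relative position of a Haar rectangle $Q=I'\times J'$ to $R=I\times J$, i.e.\ $I'\supsetneq I$ or $I'\subseteq I$, crossed with the analogous dichotomy for $J'$; rectangles disjoint from $R$ in either coordinate vanish on $R$. The all-ancestor part restricts to the constant $m_R b$, contributing $|R|(m_R b)^2\lesssim(k_1+1)^2(k_2+1)^2|R|\|b\|_{\BMO^d}^2$ by (1). A mixed part with $I'\subseteq I$, $J'\supsetneq J$ restricts on $R$ to $\chi_I(s)\chi_J(t)\sum_{I'\subseteq I}(m_J b_{I'})h_{I'}(s)$, whose squared $L^2$-norm equals $|J|\,\|P_I(m_J b)\|_2^2$ and is bounded by $(k_2+1)^2|R|\|b\|_{\BMO^d}^2$ via the coordinate-swapped version of (2); the symmetric mixed part is handled identically. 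The fully-descendant part is $P_R b$, with squared $L^2$-norm $\le|R|\|b\|_{\BMO^d}^2$ directly from the definition. Estimate (4) follows from the analogous two-piece splitting of $P_J b$ according to whether $I'\supsetneq I$ or $I'\subseteq I$, giving $(k_1+1)^2|I||J|\|b\|_{\BMO^d}^2$ from (2) applied to $P_J(m_I b)$, and $|I||J|\|b\|_{\BMO^d}^2$ directly for the part $P_{I\times J}b$.

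Sharpness is verified by a Carleson-tree construction along the chains of dyadic ancestors $I=I_{k_1}\subsetneq\cdots\subsetneq I_0=\T$ and $J=J_{k_2}\subsetneq\cdots\subsetneq J_0=\T$: set
\[
b=\sum_{\ell<k_1,\,m<k_2}\varepsilon_{\ell m}|I_\ell|^{1/2}|J_m|^{1/2}\,h_{I_\ell}\otimes h_{J_m},
\]
with signs chosen so that every contribution to $m_R b$ is positive; a direct calculation gives $\|b\|_{\BMO^d(\T^2)}=O(1)$, $m_R b\approx k_1 k_2$, and $\|\chi_R b\|_2^2\approx k_1^2 k_2^2|R|$ since every Haar function used is constant on $R$. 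The main technical obstacle is (2): it is where the logarithmic loss is generated, and its proof depends crucially on testing the product-$\BMO^d$ hypothesis against the family of sets of the form $I'\times\omega$, the configuration tailor-made to yield the 1D $\BMO^d$-bound for each slice $b_{I'}$.
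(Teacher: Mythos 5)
Your proposal is correct, and for the two central estimates it takes a genuinely different route from the paper. The paper proves the bounds on $m_R b$ and $\|m_I b\|_{\BMO^d(\T)}$ by $H^1_d$--$\BMO^d$ duality, testing $b$ against the normalized indicators $\chi_R/|R|$ and $\chi_I(s)f(t)/|I|$ and invoking the known one-variable bound $\|\chi_I/|I|\|_{H^1_d(\T)} \lesssim \log(4/|I|)$; you instead telescope along the $k_1$ strict dyadic ancestors of $I$ in the Haar expansion and extract the slice bound $\|b_{I'}\|_{\BMO^d(\T)} \le |I'|^{1/2}\|b\|_{\BMO^d(\T^2)}$ by testing the product Carleson condition on sets $I' \times \omega$. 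This is more elementary and self-contained (no $H^1_d$ norms of atoms, no duality), at the cost of being tied to the dyadic structure, whereas the paper's duality argument transfers verbatim to the non-dyadic local version needed later (Lemma \ref{lemma:locgrowth}). For the third and fourth inequalities your four-way splitting of $\chi_R b$ according to ancestor/descendant position of $I'$ and $J'$ is exactly the paper's identity $\chi_R b = P_R b + \chi_R m_I b + \chi_R m_J b - \chi_R m_R b$ in disguise; the only difference is that you estimate the mixed terms by the coordinate-swapped second inequality plus the defining Carleson condition (yielding the slightly better bound $(k_2+1)^2|R|\,\|b\|^2$ for one of them), while the paper routes these through the one-dimensional John--Nirenberg inequality. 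Your explicit tensor-product sharpness example is the construction the paper only alludes to; note that its $\BMO^d(\T^2)$ bound rests on the (standard, but worth stating) fact that the tensor product of two one-parameter Carleson sequences satisfies the product Carleson condition over arbitrary open sets, which is the same fact the paper implicitly uses in the necessity part of Theorem \ref{thm:main}.
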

\begin{proof} Let $R = I \times J$, $|I|=2^{-k_1}$, $J= 2^{-k_2}$, $\vk = (k_1, k_2)$, $j \in \N_0$. For the first inequality, consider
\begin{eqnarray*}
   \sup_{b \in \BMO^d, \|b\|_{\BMO^d} =1} |m_R b|
 & =& \sup_{b \in \BMO^d, \|b\|_{\BMO^d} =1} | \langle b, \frac{\chi_R}{|R|} \rangle| \\
&\lesssim& \|\frac{\chi_R}{|R|}\|_{H^1_d(\T^2)} \\
&=& \|\frac{\chi_I}{|J|}\|_{H^1_d(\T)} \|\frac{\chi_J}{|J|}\|_{H^1_d(\T)} \\
&\lesssim& \log(\frac{4}{|I|})  \log(\frac{4}{|J|}) \approx (k_1+1)(k_2+1),
\end{eqnarray*}
where we use the $H^1_d(\T^2)$- $\BMO^d(\T^2)$ duality in the first line
and the known one-variable results in the last line.

For the second inequality, note that
\begin{eqnarray*}
\|m_I b\|_{\BMO^d(\T)} &\approx & \sup_{f \in H^1_d(\T), \|f\|_{H^1_d} \le 1}
 |\int_\T \int_\T \frac{1}{|I|}\chi_{I}(s) f(t) b(s,t) ds dt| \\
&\lesssim &
\|b\|_{\BMO^d(\T^2)} \,\sup_{f \in H^1_d(\T), \|f\|_{H^1_d} \le 1} \|\frac{1}{|I|}\chi_I(s)f(t)\|_{H^1_d(\T^2)} \\
&=&  \|b\|_{\BMO^d(\T^2)}  \|\frac{1}{|I|}\chi_I(s)\|_{H^1_d(\T)} \lesssim  (k_1+1) \|b\|_{\BMO^d(\T^2)}.
\end{eqnarray*}

For the third inequality, write $\chi_R b(s,t) = P_R b(s,t) +
  \chi_R(s,t) m_I b(t) + \chi_R(s,t) m_J b(s) - \chi_R(s,t) m_R b$.

Clearly $\|P_R b \|_2^2 \le |R| \|b\|^2_{\BMO^d}$ and
$$\|\chi_R m_Rb \|_2^2 = |m_R
b|^2 |R| \lesssim (k_1+1)^2 (k_2+1)^2 |R| \|b\|^2_{\BMO^d}$$ by the first inequality in Lemma
\ref{lemma:avgrowth}. The results for the remaining terms follow from
the
one-dimensional John-Nirenberg inequality, since e.~g.
\begin{eqnarray*}
\|\chi_R(s,t) m_I b(t)\|_2 &=& |I|^{1/2} \| \chi_J(t) m_I b(t) \|_2 \\
&\lesssim& |I|^{1/2}  |J|^{1/2}(k_2+1)  \| m_I b(t) \|_{\BMO^d(\T)} \\
&\lesssim& |I|^{1/2}  |J|^{1/2}  (k_1+1) (k_2+1)  \| b \|_{\BMO^d(\T^2)} \\
\end{eqnarray*}
by the second inequality.

For the last inequality, note that
\begin{eqnarray*}
\|\chi_I(s) m_I (P_J b)\|^2_2 &=& |I|  \| m_I(P_J b) \|_2^2 \\
&\lesssim& |I|  |J|  \| m_I b(t) \|_{\BMO^d(\T)} \\
&\lesssim& |I|  |J| (k_1+1)^2  \| b \|_{\BMO^d(\T^2)} \\
\end{eqnarray*}
by the second inequality.

Hence
\begin{eqnarray*}
\|\chi_I(s) P_J b  \|_2 &\le& \|\chi_I(s) m_I (P_J b)\|_2 + \|P_{I \times J} b\|_2 \\
&\le& |I|^{1/2}  |J|^{1/2} (k_1+1)  \| b \|_{\BMO^d(\T^2)} + |I|^{1/2}  |J|^{1/2}  \| b \|_{\BMO^d(\T^2)} \\
& \lesssim &  |I|^{1/2}  |J|^{1/2} (k_1+1)  \| b \|_{\BMO^d(\T^2)}
\end{eqnarray*}
by the second inequality.

Sharpness follows easily from the one-dimensional case, forming an
appropriate product of $\BMO^d(\T)$ functions in the two different variables.
\end{proof}

Next, for $\vk \in \N_0 \times \N_0$ and $b\in L^2(\T^2)$, we consider the operator
$\Pi_bE_{\vk} = \Pi(b, E_{\vk}\, \cdot)$ on $L^2(\T^2)$,  given by
$$\Pi_bE_{\vk}f=\Pi(b,E_{\vk}f), \,\,\,f\in L^2(\T^2).
$$
\begin{lemma} \label{lemma:core}  Let $b \in L^2(\T^2)$ and let $\vk =(k_1, k_2) \in \N_0 \times \N_0$. Then
$$
   \|\Pi_b E_{\vk} \|_{L^2 \to L^2} = \| \Pi({\sigma_{\vk}(b)}, \cdot )\|_{L^2 \to L^2},
   $$
where $\sigma_k b$ is given by
$$
   (\sigma_{\vk}b)_{I,J} =\left\{ \begin{matrix} b_{I,J} & \text{ if } & |I|
   > 2^{-k_1}, |J|>2^{-k_2} \\
      (\sum_{J' \subseteq J} |b_{I,J'}|^2)^{1/2}   & \text{ if } &
   |I| > 2^{-k_1}, |J|=2^{-k_2}\\
 (\sum_{I' \subseteq I} |b_{I',J}|^2)^{1/2}   & \text{ if } &
   |I| = 2^{-k_1}, |J|>2^{-k_2}\\
     (\sum_{I' \subseteq I, J' \subseteq J} |b_{I',J'}|^2)^{1/2}   &
   \text{ if } &  |I| = 2^{-k_1}, |J|=2^{-k_2}\\
      0 & \text{ otherwise. }&
                               \end{matrix} \right.
$$
In particular,
$$
   S^2[\sigma_{\vk}b]= E_{\vk} S^2[b],
$$
where $S^2[b]$ denotes the square of the dyadic square function in (\ref{eq:sq}),
and
$$
         \| \sigma_{\vk} b\|_2 = \|b \|_2.
$$

\end{lemma}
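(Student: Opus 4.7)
The plan is to deduce all three identities from a single piece of Haar bookkeeping. The preliminary observation I would establish first is that for any $R = I \times J$ at level $\vj = (j_1, j_2)$ and any $f \in L^2(\T^2)$,
\[
   m_R(E_{\vk} f) = m_{\tilde R} f,
\]
where $\tilde R$ denotes the unique dyadic rectangle containing $R$ whose level is $(\min(j_1,k_1), \min(j_2,k_2))$. This holds because $h_{I'}\otimes h_{J'}$ has non-zero mean over $R$ only when $I \subsetneq I'$ and $J \subsetneq J'$, while $E_{\vk}f$ retains exactly those Haar contributions with level strictly below $\vk$ in both coordinates; combining the two conditions one gets precisely the Haar contributions to $E_{(\min(j_1,k_1),\min(j_2,k_2))}f$, whose restriction to $R$ is the constant $m_{\tilde R}f$.

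For the operator-norm identity, apply Haar orthogonality to write
\[
   \|\Pi_b E_{\vk} f\|_2^2 = \sum_R |b_R|^2 |m_R(E_{\vk} f)|^2 = \sum_R |b_R|^2 |m_{\tilde R} f|^2,
\]
and then regroup the sum by the ancestor $\tilde R$. For $\tilde R = \tilde I \times \tilde J$ at level $(\ell_1, \ell_2)$ with $\ell_i \le k_i$, a short direct check shows that the descendants $R$ mapping to $\tilde R$ are exactly: $\{\tilde R\}$ alone when both $\ell_i < k_i$; $\{I \times \tilde J : I \subseteq \tilde I\}$ when $\ell_1 = k_1$ and $\ell_2 < k_2$; the symmetric family when $\ell_1 < k_1$ and $\ell_2 = k_2$; and $\{I \times J : I \subseteq \tilde I,\; J \subseteq \tilde J\}$ when $(\ell_1, \ell_2) = \vk$. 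These four cases match the four cases of the definition of $\sigma_{\vk}b$ exactly, so the sum $\sum_{R}|b_R|^2$ over each group is precisely $|(\sigma_{\vk}b)_{\tilde R}|^2$. Therefore
\[
   \|\Pi_b E_{\vk}f\|_2^2 = \sum_{\tilde R} |(\sigma_{\vk}b)_{\tilde R}|^2 |m_{\tilde R} f|^2 = \|\Pi(\sigma_{\vk}b,f)\|_2^2
\]
for every $f \in L^2(\T^2)$, giving the claimed equality of operator norms (in fact, of $L^2$-norms on every input).

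The remaining two identities follow at once from the same partitioning. The equality $\|\sigma_{\vk}b\|_2 = \|b\|_2$ is immediate from Parseval, since the ancestor map just described partitions the collection of Haar coefficients $\{b_R\}$ into disjoint groups, each contributing to exactly one squared coefficient of $\sigma_{\vk}b$. For $S^2[\sigma_{\vk}b] = E_{\vk}S^2[b]$, reading $E_{\vk}$ as the conditional expectation at scale $\vk$, I would evaluate both sides at a point $x$ lying in the level-$\vk$ rectangle $R_0$, and match the contributions of each rectangle $R$ to $|R_0|^{-1}\int_{R_0}S^2[b] = \sum_{R}|b_R|^2 |R\cap R_0|/(|R||R_0|)$ against the ancestor terms $|(\sigma_{\vk}b)_{\tilde R}|^2/|\tilde R|$ with $\tilde R \ni x$; the same four-case analysis yields the identity term by term. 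The main obstacle throughout is nothing more than this multi-case bookkeeping, but once the bijection between the cases in the definition of $\sigma_{\vk}$ and the positions of descendants relative to $\tilde R$ is in place, all three identities fall out by routine computation.
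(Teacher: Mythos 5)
Your proposal is correct and takes essentially the same route as the paper: both arguments evaluate $\|\Pi_b E_{\vec k}f\|_2$ exactly for every fixed $f$ and regroup the contributions according to the same four cases obtained by comparing the level of a rectangle with $\vec k$, which is precisely how $\sigma_{\vec k}b$ is defined. You carry out the bookkeeping via Parseval on the Haar coefficients $b_R\, m_{\tilde R}f$ of the output and the ancestor map $R\mapsto\tilde R$, while the paper does the identical regrouping through the martingale-difference splitting $E_{\vec j}E_{\vec k}=E_{(\min(j_1,k_1),\min(j_2,k_2))}$ and orthogonality of the blocks; your reading of $E_{\vec k}S^2[b]$ as the conditional expectation at scale $\vec k$ is the intended interpretation of that identity.
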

\begin{proof}
Let $f \in L^2(\T^2)$. Then
\begin{eqnarray*}
\|\Pi_b E_{\vk} f \|^2 &=& \|\sum_{\vj} (\Delta_{\vj} b) E_{\vj} E_{\vk} f \|^2 \\
  &=& \sum_{\vj \ge \vk}  \|(\Delta_{\vj} b) E_{\vk} f \|^2 +
     \sum_{j_1\ge k_1, j_2 < k_2}  \|(\Delta_{\vj} b)  E_{(k_1, j_2)} f \|^2 \\
 && \quad +
     \sum_{j_1< k_1, j_2 \ge k_2}  \|(\Delta_{\vj} b) E_{(j_1, k_2)} f \|^2 +
      \sum_{\vj < \vk}  \|(\Delta_{\vj} b) E_{\vj}  f \|^2 \\
&=&  \|(\sum_{\vj \ge \vk}  |\Delta_{\vj} b|^2 )^{1/2} E_{\vk}f \|^2 +
      \sum_{j_2 < k_2 }  \|(\sum_{j_1 \ge k_1}|\Delta_{\vj} b|^2)^{1/2} E_{(k_1, j_2)} f \|^2 \\
 &&  \quad +
       \sum_{j_1 < k_1}  \|(\sum_{j_2 \ge k_2}|\Delta_{\vj} b|^2)^{1/2} E_{(j_1,k_2)} f \|^2 +
      \sum_{\vj < \vk }  \|(\Delta_{\vj} b) E_{\vj} f \|^2 \\
 &=& \|\sum_{\vj \le \vk} \Delta_{\vj} (\sigma_{\vk} b) E_{\vj}  f \|^2
= \|\Pi(\sigma_{\vk} b, f) \|^2.
\end{eqnarray*}

The remaining identities for $\sigma_{\vk}b$ follow directly from the definition.
\end{proof}

Here is our main technical lemma.
\begin{lemma} \label{lemma:core2}  Let $\phi, b \in \BMO^d(\T^2)$ and $\vk = (k_1, k_2) \in \N_0 \times \N_0$.
  Then
$$  \|\Pi\left(\Pi(\phi,b), E_{\vk}\,\,  \cdot \right)\|_{L^2 \to L^2} \lesssim (k_1+1)(k_2+1) \, \|\phi\|_{\BMO^d} \|b \|_{\BMO^d}.
           $$
\end{lemma}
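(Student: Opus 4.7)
The plan is to invoke Lemma~\ref{lemma:core}, together with the Chang--Fefferman-type equivalence $\|\Pi_h\|_{L^2\to L^2}\asymp \|h\|_{\BMO^d}$ for the product main paraproduct, to reduce the claimed operator-norm estimate to the $\BMO^d$-norm bound
$$\|\sigma_\vk g\|_{\BMO^d}\lesssim (k_1+1)(k_2+1)\|\phi\|_{\BMO^d}\|b\|_{\BMO^d},\qquad g:=\Pi(\phi,b).$$
Fixing an open $\Omega\subseteq\T^2$, I would then expand $\|P_\Omega\sigma_\vk g\|_2^2=\sum_{R\subseteq\Omega}(\sigma_\vk g)_R^2$ and split the sum according to the four formulae defining $\sigma_\vk$: both sides of $R=I\times J$ strictly above the critical scale $(2^{-k_1},2^{-k_2})$, one side above and one equal, or both equal.

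The core ingredient is the following sublemma, which I would prove first: for every $R_0\in\RR_\vk$,
$$\sum_{R\subseteq R_0}|\phi_R|^2|m_R b|^2\lesssim (k_1+1)^2(k_2+1)^2\|\phi\|_{\BMO^d}^2\|b\|_{\BMO^d}^2\,|R_0|.$$
Its proof proceeds by decomposing $m_R b=m_{R_0}b+m_R(\chi_{R_0}(b-m_{R_0}b))$. The first piece is controlled by $|m_{R_0}b|\lesssim (k_1+1)(k_2+1)\|b\|_{\BMO^d}$ from Lemma~\ref{lemma:avgrowth}, combined with $\sum_{R\subseteq R_0}|\phi_R|^2\le |R_0|\|\phi\|_{\BMO^d}^2$. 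For the second piece, the sum in question is at most $\|\Pi_\phi\tilde b\|_2^2$ with $\tilde b:=\chi_{R_0}(b-m_{R_0}b)$, which by the $L^2$-boundedness of $\Pi_\phi$ is at most $\|\phi\|_{\BMO^d}^2\|\tilde b\|_2^2$, and Lemma~\ref{lemma:avgrowth} gives $\|\tilde b\|_2^2\lesssim (k_1+1)^2(k_2+1)^2|R_0|\|b\|_{\BMO^d}^2$. This sublemma immediately handles the two ``diagonal'' cases of $\sigma_\vk g$. For the two mixed cases I would apply a one-dimensional analog on the slice $\phi^I(t):=\sum_{J'}\phi_{I,J'}h_{J'}(t)$, using the bound $\|\phi^I\|_{\BMO^d(\T)}^2\le |I|\,\|\phi\|_{\BMO^d(\T^2)}^2$ (the product BMO condition applied to $\Omega=I\times J_0$) and $\|m_I b\|_{\BMO^d(\T)}\lesssim (k_1+1)\|b\|_{\BMO^d}$ from Lemma~\ref{lemma:avgrowth}.

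The hard part will be summing the mixed-case contributions over the ``large'' dyadic direction without accumulating a spurious logarithmic factor. A naive estimate combining the per-$I$ bound with $\sum_{|I|>2^{-k_1}}|I||E_I|\lesssim (k_1+1)|\Omega|$, where $E_I:=\bigcup\{J\in\D_{k_2}:I\times J\subseteq\Omega\}$, produces $(k_1+1)^3(k_2+1)^2|\Omega|$, which is one factor of $(k_1+1)$ too large. I expect this to be remediable by organizing the sum generation by generation: for each fixed $i<k_1$, the products $\{I\times\Omega^{(I)}:I\in\D_i\}$ (with $\Omega^{(I)}:=\{t:I\times\{t\}\subseteq\Omega\}$) are pairwise disjoint, so $\tilde\Omega_i:=\bigcup_{I\in\D_i}I\times\Omega^{(I)}$ is contained in $\Omega$ and Chang--Fefferman applied on $\tilde\Omega_i$ gives $\sum_{I\in\D_i}\|P_{\Omega^{(I)}}\phi^I\|_2^2\le|\Omega|\,\|\phi\|_{\BMO^d}^2$. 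Feeding this into the mixed-case estimate absorbs the individual $|I|$ factors into the two-dimensional BMO norm and, summed over the at most $k_1$ relevant generations, yields the sharp $(k_1+1)^2(k_2+1)^2|\Omega|$ bound in Case~B (and symmetrically in Case~C).
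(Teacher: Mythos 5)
Your reduction via Lemma \ref{lemma:core} to the bound $\|\sigma_{\vk}(\Pi(\phi,b))\|_{\BMO^d}\lesssim (k_1+1)(k_2+1)\|\phi\|_{\BMO^d}\|b\|_{\BMO^d}$, the case split according to the four formulae for $\sigma_{\vk}$, and your treatment of the two diagonal cases (the growth bound $|m_Rb|\lesssim (k_1+1)(k_2+1)\|b\|_{\BMO^d}$ for coarse $R$ plus the Carleson property of $\phi$, and, for $R_0\in\RR_{\vk}$, the localization $\sum_{R\subseteq R_0}|\phi_R|^2|m_Rb|^2\le\|\Pi_\phi(\chi_{R_0}b)\|_2^2\lesssim\|\phi\|_{\BMO^d}^2\|\chi_{R_0}b\|_2^2$) all coincide with the paper's argument. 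The gap is exactly where you suspect it: the mixed cases, and your proposed repair does not close it. First, the accounting does not improve: the generation-wise disjointness you invoke was already implicit in your ``naive'' estimate, since $\sum_{I\in\D_i}|I|\,|E_I|\le|\Omega|$ for each fixed $i<k_1$; replacing $\|\phi\|_{\BMO^d}^2\sum_{I\in\D_i}|I|\,|E_I|$ by $\sum_{I\in\D_i}\|P_{\Omega^{(I)}}\phi^I\|_2^2\le\|\phi\|_{\BMO^d}^2|\Omega|$ yields the same per-generation bound, while the factor $(k_1+1)^2$ coming from $\|m_Ib\|_{\BMO^d(\T)}^2$ persists in every generation, so summing over the $\sim k_1$ generations still gives $(k_1+1)^3(k_2+1)^2|\Omega|$. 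Second, and more fundamentally, the per-slice inequality that ``feeding in'' $\sum_I\|P_{\Omega^{(I)}}\phi^I\|_2^2$ would require, namely $\sum_{J'\subseteq J}|\phi_{IJ'}|^2|m_{I\times J'}b|^2\lesssim (k_1+1)^2(k_2+1)^2\|b\|_{\BMO^d}^2\|P_J\phi^I\|_2^2$, is false: take $\phi^I$ with a single Haar coefficient at a very deep $J_0'\subset J$ and $b=b_1\otimes b_2$ with $|m_Ib_1|\approx k_1$ and $|m_{J_0'}b_2|\approx\log(1/|J_0'|)\gg k_2$. The oscillatory half of your own slice argument genuinely needs the Carleson/BMO norm of $P_J\phi^I$ (through the $L^2$ boundedness of the one-dimensional paraproduct), not its $L^2$ mass, and that is precisely the step that reinstates the $|I|\,|E_I|$ bookkeeping and the lost logarithm.

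The missing idea, which is how the paper proceeds, is structural rather than combinatorial: the mixed piece of $\sigma_{\vk}(\Pi(\phi,b))$ has Haar support only at second-variable scale exactly $2^{-k_2}$ (and first-variable scales $>2^{-k_1}$), and for such functions the product $\BMO^d$ norm coincides with the rectangular one, so it suffices to test on rectangles $R=K\times J$ with $|K|>2^{-k_1}$, $|J|=2^{-k_2}$. On one such rectangle all the nested intervals $I\subseteq K$ are handled simultaneously by the two-parameter bound $\sum_{I\subseteq K,\,J'\subseteq J}|\phi_{IJ'}|^2|m_{I\times J'}b|^2\le\|\Pi_\phi(\chi_{K\times J}b)\|_2^2\lesssim\|\phi\|_{\BMO^d}^2\|\chi_{K\times J}b\|_2^2\lesssim (k_1+1)^2(k_2+1)^2\|\phi\|_{\BMO^d}^2\|b\|_{\BMO^d}^2|K\times J|$, so no summation over generations ever occurs and no logarithm is lost; equivalently, in your open-set sum you should group, for each $J\in\D_{k_2}$, the admissible $I$ under their maximal elements (which are pairwise disjoint for fixed $J$) and apply this estimate once per maximal rectangle. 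Without this observation, or a genuine substitute for it, your Cases B and C remain one factor of $(k_1+1)$ short of the claim.
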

\begin{proof}
By Lemma \ref{lemma:core}, we have to estimate the $\BMO^d$ norm of $\sigma_{\vk} (\Pi_{\phi} b)=\sigma_{\vk} (\Pi(\phi, b))$. Clearly
\begin{multline*}
   \sigma_{\vk} ( \Pi_\phi b) =  \sigma_{\vk} (E_{\vk} \Pi_\phi b) +
    \sigma_{\vk} (E_{k_1}^{(1)} Q_{k_2}^{(2)}\pi_\phi b) +  \sigma_{\vk}(E_{k_2}^{(2)}
   Q_{k_1}^{(1)}\Pi_\phi b) +  \sigma_{\vk}(Q_{\vk}\Pi_\phi b)\\
 = E_{\vk} \Pi_\phi b +
   \sigma_{\vk}(E_{k_1}^{(1)}Q_{k_2}^{(2)}\Pi_{ \phi} b) +  \sigma_{\vk}(E_{k_2}^{(2)}Q_{k_1}^{(1)}
   \Pi_{\phi} b) +  \sigma_{\vk}(Q_{\vk} \Pi_\phi b)  = I + II + III + IV.
\end{multline*}
(compare this decomposition to the one in the definition of  $\sigma_{\vk}$
     in Lemma \ref{lemma:core}).

We start with term I.
For any open set $\Omega \subseteq \T^2$,
\begin{eqnarray*}
    \frac{1}{|\Omega|} \|P_{\Omega} E_{\vk} \Pi_\phi b\|_2^2
&=& \frac{1}{|\Omega|} \sum_{R = I \times J, |I| > 2^{-k_1}, |J|> 2^{-k_2},
   R \subset \Omega}  |\phi_R|^2 |m_R b|^2 \\
& \lesssim & \frac{(k_1+1)^2 (k_2+1)^2}{|\Omega|} \sum_{R = I \times J, |I| >
2^{-k_1}, |J|> 2^{-k_2},
   R \subset \Omega}  |\phi_R|^2   \|b\|^2_{\BMO^d} \\
&\lesssim & (k_1+1)^2 (k_2+1)^2 \| \phi \|^2_{\BMO^d}  \|b\|^2_{\BMO^d}.
\end{eqnarray*}
by Lemma \ref{lemma:avgrowth}.

For term II, note that
since $\sigma_{\vk}({E^{(1)}_{k_1}
    Q^{(2)}_{k_2} \Pi_\phi b})$ has only nontrivial Haar coefficients for
those $R = I \times J$ with $|J| = 2^{-k_2}$ and $|I| > 2^{-k_1}$ (this corresponds to the second term in the definition of $\sigma_{\vk}
$ in Lemma \ref{lemma:core}), it is
sufficient to check the BMO norm on rectangles $R = I \times J$ with
$|J| = 2^{-k_2}$ and $|I| > 2^{-k_1}$. Then
\begin{eqnarray*}
   \frac{1}{|R|}\|P_R  \sigma_{\vk}({E^{(1)}_{k_1}
    Q^{(2)}_{k_2} \Pi_\phi b})\|^2_2  &=& \frac{1}{|R|}\sum_{I' \subseteq I} \left|\left(\sigma_{\vk}({E^{(1)}_{k_1}
    Q^{(2)}_{k_2} \Pi_\phi b})\right)_{I',J}\right|^2 \\
     &=& \frac{1}{|R|}\sum_{I' \subseteq I, J' \subseteq J, I'\times J' \in \RR }
   |\phi_{I' \times J'}|^2 |m_{I' \times J'} b|^2 \\
&\le& \frac{1}{|R|}\| \Pi_\phi \chi_{R} b \|_2^2 \lesssim
\|\phi\|_{\BMO^d}^2 \frac{1}{|R|}\|
    \chi_R b\|^2_2\\ &\lesssim& (k_1+1)^2 (k_2+1)^2  \|\phi\|_{\BMO^d}^2 \|b\|_{\BMO^d}^2.
\end{eqnarray*}
Term III is dealt with analogously.
For term IV,
note that since $\sigma_{\vk}( Q_{\vk} \Pi_\phi b)$
 has only nontrivial Haar
coefficient for $R \in \RR_{\vk}$, it is enough to check the BMO norm
on rectangles of this type, and we obtain for $R= I \times J \in \RR_{\vk}$:
\begin{eqnarray*}
 \frac{1}{|R|} \int_R |P_R \sigma_{\vk}(Q_{\vk} \Pi_\phi b )   |^2 ds dt
&\le& \frac{1}{|I||J|} \sum_{I' \subseteq I, J' \subseteq J}
  |\phi_{I', J'}|^2 |m_{I',J'} b|^2 \\
&=&  \frac{1}{|R|} \|\Pi_\phi \chi_R b\|^2_2 \\
& \lesssim &  \frac{1}{|R|} \|\phi\|^2_{\BMO^d} \| \chi_R b\|_2^2 \\
&\lesssim &   (k_1+1)^2(k_2+1)^2 \| \phi\|^2_{\BMO^d} \|b\|^2_{\BMO^d}
\end{eqnarray*}
by Lemma \ref{lemma:avgrowth}.
\end{proof}

In particular, we have
\begin{lemma} \label{lemma:core2bis}  Let $\phi \in \LMO^d(\T^2)$,  $b \in \BMO^d(\T^2)$ and $\vk,\vj \in \N_0 \times \N_0$.
  Then
$$  \|\Pi\left(\Pi(Q_{\vj}\phi,b), E_{\vk}\,\,  \cdot \right)\|_{L^2 \to L^2} \lesssim \frac{|\vk + \vec{1}|}{|\vec{j} + \vec{1}|} \, \|\phi\|_{\LMO^d} \|b \|_{\BMO^d},
           $$ where $|\vk + \vec{1}|=(k_1+1)(k_2+1)$ and $|\vec{j} + \vec{1}|=(j_1+1)(j_2+1)$.
\end{lemma}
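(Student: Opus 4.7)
The statement is an immediate specialisation of Lemma \ref{lemma:core2} combined with the defining property of the space $\LMO^d(\T^2)$, so the plan is very short.

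First I would observe that $Q_{\vj}\phi \in L^2(\T^2)$ (since $\phi \in L^2$ and $Q_{\vj}$ is an orthogonal projection) and, by Definition \ref{def:lmod}, we have the quantitative control
\[
   \|Q_{\vj}\phi\|_{\BMO^d(\T^2)} \;\le\; \frac{1}{(j_1+1)(j_2+1)}\,\|\phi\|_{\LMO^d}.
\]
So $Q_{\vj}\phi$ is itself an element of $\BMO^d(\T^2)$ whose norm carries the desired denominator.

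Next I would simply apply Lemma \ref{lemma:core2} with the symbol $\phi$ there replaced by $Q_{\vj}\phi$ (and the same $b$, $\vk$). That lemma gives
\[
   \bigl\|\Pi\bigl(\Pi(Q_{\vj}\phi, b), E_{\vk}\,\cdot\,\bigr)\bigr\|_{L^2\to L^2}
   \;\lesssim\; (k_1+1)(k_2+1)\,\|Q_{\vj}\phi\|_{\BMO^d}\,\|b\|_{\BMO^d}.
\]
Substituting the bound on $\|Q_{\vj}\phi\|_{\BMO^d}$ from the first step produces the factor $\frac{(k_1+1)(k_2+1)}{(j_1+1)(j_2+1)}$ in front of $\|\phi\|_{\LMO^d}\|b\|_{\BMO^d}$, which is exactly the claimed inequality in the notation $|\vk+\vec{1}|/|\vj+\vec{1}|$.

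There is no genuine obstacle here: Lemma \ref{lemma:core2} was formulated for an arbitrary $\BMO^d$ symbol, and the $\LMO^d$ hypothesis is used purely as an input providing the quantitative $\BMO^d$ decay of $Q_{\vj}\phi$. The only thing worth remarking is that the proof does \emph{not} require any finer cancellation between $Q_{\vj}\phi$ and $b$ beyond what Lemma \ref{lemma:core2} already exploits; the improvement in the bound is entirely absorbed into the smaller $\BMO^d$ norm of $Q_{\vj}\phi$.
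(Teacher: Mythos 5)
Your proposal is correct and matches the paper's own proof, which simply cites Definition \ref{def:lmod} and Lemma \ref{lemma:core2}: apply Lemma \ref{lemma:core2} with symbol $Q_{\vj}\phi$ and use the $\LMO^d$ bound $\|Q_{\vj}\phi\|_{\BMO^d}\lesssim \|\phi\|_{\LMO^d}/((j_1+1)(j_2+1))$. Nothing further is needed.
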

\begin{proof} Definition \ref{def:lmod} and Lemma \ref{lemma:core2}.
\end{proof}

\begin{proof} of Theorem \ref{thm:main}. We begin by proving necessity. Suppose that \\
$\Pi_\phi: \BMO^d(\T^2) \rightarrow \BMO^d(\T^2)$ is bounded.  Let $R = I \times J$ be a dyadic rectangle, with $|I| =2^{-k}$
and $|J|= 2^{-l}$, and let
$\Omega \subseteq R$ be open. It is easy to see that there exists a function $b \in \BMO^d(\T^2)$ with
$$
       b|_R \equiv (k+1)(l+1) \text{ and }  \|b\|_{\BMO^d} \le C,
$$
 where $C$ is a constant independent of $R$.
Such a function can for example be found by forming the product $b_1 \otimes b_2$ of two one-variable functions $b_1$, $b_2$,
 which have the corresponding properties for the intervals $I$ and $J$, respectively.
For details on
the construction in the one-dimensional case, see e.~g.~\cite{benoit}.
Then
\begin{multline*}
    \frac{(\log\frac{4}{|I|})^2 (\log\frac{4}{|J|})^2}{|\Omega|} \sum_{Q \in \RR, Q \subseteq \Omega} |\phi_Q|^2
 \approx  \frac{(k+1)^2(l+1)^2}{|\Omega|} \sum_{Q \in \RR, Q \subseteq \Omega} |\phi_Q|^2 \\
   =  \frac{1}{|\Omega|} \sum_{Q \in \RR, Q \subseteq \Omega} |\phi_Q|^2 |m_Qb|^2
  \le \|\Pi_\phi b\|^2_{\BMO^d} \le C^2 \|\Pi_\phi\|^2_{\BMO^d \to \BMO^d}.
\end{multline*}
Thus $\phi \in \LMO^d(\T^2)$ by Proposition \ref{prop:char}, with the appropriate norm estimate.

To prove sufficiency of the $\LMO^d$ condition for boundedness of the paraproduct on $\BMO^d$,
let $\phi \in \LMO^d(\T^2)$ and $b \in \BMO^d(\T^2)$. We will estimate
$ \| \Pi_\phi b \|_{\BMO^d} \approx \| \Pi_{\Pi(\phi,b)}\|_{L^2 \to L^2}=\| \Pi\left(\Pi(\phi,b),\cdot\right)\|_{L^2 \to L^2}$ by mean of Cotlar's Lemma,
and use Lemma \ref{lemma:core2bis} to control off-diagonal decay.

For $N,K \in \N_0$, let
\begin{equation} \label{eq:pnk}
\begin{split}
   P_{N,K} &= \sum_{j_1=2^N-1}^{2^{N+1}-2} \sum_{j_2=2^K-1}^{2^{K+1}-2}
   \Da_{\vj}, \\
P^{N,K} &= \sum_{j_1=2^N-1}^\infty \sum_{j_2=2^K-1}^\infty \Da_{\vj},
\end{split}
\end{equation}
and
$$
   T_{N,K} =  \Pi_{\Pi(\phi, b)} P_{N,K}=\Pi\left(\Pi(\phi,b), P_{N,K}\, \cdot\right).
$$

That means, we wish to estimate the $L^2 - L^2$ operator norm of $ \Pi(\Pi(\phi, b), \cdot ) = \sum_{N,K=0}^\infty T_{N,K}$.
Clearly $T_{N,K} T_{N',K'}^* =0$ for $N \neq N'$ or $K \neq K'$. Therefore, we only have to estimate
the norm of $T_{N,K}^* T_{N',K'}$ for $N, N', K,K' \in \N$. Letting
$\overline{N}= \max\{N,N'\}$, $\underline{N}= \min\{N,N'\}$,
    $\overline{K}= \max\{K,K'\}$, $\underline{K}= \min\{K,K'\}$
    and using the elementary identities
   \begin{multline*}
     \Pi(\Pi(\phi, b), \cdot) P_{N,K}  = \Pi(\Pi(P^{N,K} \phi, b), \cdot) P_{N,K}  \\ =  \Pi(P^{N,K} \Pi( \phi, b), \cdot) P_{N,K}=  P^{N,K} \Pi(\Pi( \phi, b), \cdot) P_{N,K}
   \end{multline*}
and
 $$
  P^{N,K}   \Pi(\Pi(\phi, b), \cdot)   = \Pi(\Pi(P^{N,K} \phi, b), \cdot),
 $$
we obtain
\begin{eqnarray*}
  \|T_{N,K}^* T_{N',K'}\| &=& \|P_{N,K} \left(\Pi(\Pi(\phi, b), \cdot )\right)^*\Pi(\Pi(\phi, b), P_{N',K'}\,\cdot)\| \\
&=&  \|P_{N,K} \Pi({ \Pi(\phi,b)}, \cdot) )^* P^{N,K} P^{N',K'} \Pi({\Pi(\phi,b)}, \cdot)
  P_{N',K'}\| \\
 &=&  \|P_{N,K} \Pi({ \Pi(\phi,b)}, \cdot)^* P^{\oN,\oK} \Pi({ \Pi(\phi,b)}, \cdot)
  P_{N',K'}\| \\
&=&  \|P_{N,K} \Pi( \Pi (P^{\oN,\oK} \phi ,b), \cdot )^*  \Pi( \Pi (P^{\oN,\oK} \phi ,b), \cdot )
  P_{N',K'}\| \\
 &\le &  \|P_{N,K} \Pi( \Pi (P^{\oN,\oK} \phi ,b), \cdot )^* \|  \| \Pi( \Pi (P^{\oN,\oK} \phi ,b), \cdot )
  P_{N',K'}\| \\
   &\le &  \| \Pi( \Pi (P^{\oN,\oK} \phi ,b), P_{N,K} \cdot ) \|  \| \Pi( \Pi (P^{\oN,\oK} \phi ,b),P_{N',K'}  \cdot )    \| \\
&\lesssim& \frac{{2^{\uN +1}}{2^{\uK +1}}}{{2^{\oN}}{2^{\oK}}}
                     \frac{{2^{\oN +1}}{2^{\oK +1}}}{{2^{\oN}}{2^{\oK}}}
      \|\phi\|^2_{\LMO^d}\|b\|^2_{\BMO^d}\\
&\lesssim & 2^{-|N-N'|} 2^{-|K-K'|} \|\phi\|^2_{\LMO^d}\|b\|^2_{\BMO^d}
\end{eqnarray*}
by Lemma \ref{lemma:core2bis}. In particular, the $T_{N,K}$ are uniformly bounded in norm,
and there exists a positive sequence $(\alpha(i,j))_{i, j \ge 0}$ with
$$
   \sum_{i,j=1}^\infty \alpha(i,j)^{1/2} < \infty
$$
such that
$$
   \|T^*_{N,K} T_{N',K'}\| \le \alpha(|N-N'|, |K-K|) \, \|\phi\|^2_{\LMO^d}\|b\|^2_{\BMO^d}.
$$
Thus, by Cotlar's Lemma, $T= \Pi(\Pi(\phi, b),\cdot)$ is bounded, and there exists an absolute constant $C>0$ with
$$
\|\Pi\left(\Pi(\phi, b), \cdot \right)\|\le C \|\phi\|_{\LMO^d}\|b\|_{\BMO^d}.
$$
Hence
$$
     \| \Pi(\phi, b)\|_{\BMO^d} \le C \|\phi\|_{\LMO^d}\|b\|_{\BMO^d}.
$$
\end{proof}

In the previous theorem, sharp estimates of $L^2$ norms of restrictions of $\BMO$ functions to rectangles were required. We do not know
such sharp estimates for restrictions of $\BMO$ functions to general open sets:
\begin{question}
By duality,
$$
    |m_\Omega b| \lesssim \| \frac{\chi_\Omega}{|\Omega|}\|_{H^1_d(\T^2)} \|b\|_{\BMO^d},\,\,\, b\in \BMO^d(\T^2)
$$
for all open sets $\Omega$, and this is sharp for each individual
set $\Omega$.

Is it true that ``estimates for $p$-norms are no worse than the estimate for the average $m_{\Omega}b$'', i.~e.
$$
     \|\chi_\Omega b\|_p^p \lesssim \|\frac{\chi_\Omega}{|\Omega|}\|^p_{H^1_d(\T^2)} |\Omega|  \text{ for } 1< p < \infty?
$$
The John-Nirenberg Theorem \cite{ChFef1} gives
$$
     \|\chi_\Omega b\|_p^p \lesssim \log(\frac{4}{|\Omega|})^{2p} |\Omega|,
$$
which is easily seen to be not sharp for certain sets $\Omega$ (for example by considering
``long thin'' rectangles and using Lemma \ref{lemma:avgrowth}).
\end{question}


\section{The other paraproducts}    \label{sec:others}
There are four dyadic paraproducts in two variables, namely the paraproduct $\Pi = \Pi^{(0,0)}$ discussed above, its
adjoint defined by
$$
\Pi^{(1,1)}(\phi,f)=\Delta_\phi f=\Delta(\phi,f) =  \sum_{R \in \RR} \frac{\chi_R}{|R|} \phi_R f_R,
$$ and the mixed paraproducts
given by
$$
\Pi^{(1,0)}(\phi,f)=\sum_{I \times J \in \RR}\frac{\chi_I(s)}{|I|} h_J(t) \phi_{I \times J} m_J f_I,
$$
$$
\Pi^{(0,1)}(\phi,f)= \sum_{I \times J \in \RR} h_I(s)\frac{\chi_J(t)}{|J|} \phi_{I \times J} m_I f_J,
$$
see \cite{bp}. Here and in the following, the variables are sometimes included to explain dependency on the different variables rather than to indicate pointwise equality.

Interestingly, all four paraproducts have a
different boundedness behaviour on $\BMO^d(\T^2)$.
\begin{definition}
Let $\phi \in L^2(\T^2)$. We say that $ \phi \in \LMO_1^d(\T^2)$, if there exists $C >0$ with
$$
    \|Q^{(1)}_{i} \phi\|_{\BMO^d} \le C \frac{1}{i+1 }
$$
for all $i \in \N_0$.

We say that
$ \phi \in \LMO_2^d(\T^2)$, if there exists $C >0$ with
$$
    \|Q^{(2)}_{j} \phi\|_{\BMO^d} \le C \frac{1}{j +1}
$$
for all $j \in \N_0$.  The infimum of such constants is denoted by $\|\phi\|_{\LMO_1^d}$, $\|\phi\|_{\LMO_2^d}$,
respectively.
\end{definition}

\begin{theorem}   \label{thm:others}
Let $\phi \in L^2(\T^2)$. Then
\begin{enumerate}
\item $\Delta_\phi: \BMO^d(\T^2) \rightarrow \BMO^d(\T^2)$ is bounded, if and only if $ \phi \in \BMO^d$. \\
Moreover, $\|\Delta_\phi\|_{\BMO^d \rightarrow \BMO^d} \approx\|\phi\|_{\BMO^d}$.
\item  $\Pi^{(0,1)}(\phi, \cdot): \BMO^d(\T^2) \rightarrow \BMO^d(\T^2)$ is
  bounded, if $ \phi \in \LMO_1^d(\T^2)$.\\
Moreover, $\|\Pi^{(0,1)}(\phi,\cdot)\|_{\BMO^d(\T^2) \rightarrow \BMO^d(\T^2)}
  \lesssim \|\phi\|_{\LMO_1^d(\T^2)}$.
\item  $\Pi^{(1,0)}(\phi,\cdot): \BMO^d(\T^2) \rightarrow \BMO^d(\T^2)$ is
  bounded, if $ \phi \in \LMO_2^d(\T^2)$.\\
Moreover, $\|\Pi^{(1,0)}(\phi,\cdot)\|_{\BMO^d(\T^2) \rightarrow \BMO^d(\T^2)}
  \lesssim \|\phi\|_{\LMO_2^d(\T^2)}$.
\end{enumerate}
\end{theorem}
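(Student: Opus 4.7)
The plan is to mirror the architecture of the proof of Theorem~\ref{thm:main}, with a short duality shortcut for Part~(1) and the observation that Parts~(2) and~(3) are interchanged by swapping the roles of the two variables.

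For Part~(1), I would first verify by direct computation that $\Delta_\phi$ is precisely the $L^2$-adjoint of $\Pi_\phi$: pairing $\Pi_\phi f$ against $g$ yields $\sum_R \phi_R\,(m_R f)\,g_R$, while pairing $f$ against $\Delta_\phi g$ yields the same sum $\sum_R \phi_R g_R\,(m_R f)$. Since $\BMO^d(\T^2)$ is the dual of $H^1_d(\T^2)$, this identifies
\[
\|\Delta_\phi\|_{\BMO^d \to \BMO^d}\,=\,\|\Pi_\phi\|_{H^1_d \to H^1_d}.
\]
Part~(1) therefore reduces to the classical equivalence $\|\Pi_\phi\|_{H^1_d \to H^1_d} \approx \|\phi\|_{\BMO^d}$, provable by an atomic-decomposition argument for product $H^1_d$ analogous to the one-parameter case (cf.~\cite{bp}).

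For Parts~(2) and~(3), the proof parallels that of Theorem~\ref{thm:main}, with one-variable shells replacing the two-variable ones in Cotlar's Lemma. By symmetry it suffices to treat~(2). Using the paraproduct-$\BMO$ equivalence $\|F\|_{\BMO^d} \approx \|\Pi_F\|_{L^2 \to L^2}$ (Chang--Fefferman), the task reduces to estimating $\|\Pi(\Pi^{(0,1)}(\phi,b),\,\cdot\,)\|_{L^2 \to L^2}$. The key technical step is the following one-variable analog of Lemma~\ref{lemma:core2}: for $\phi,b\in\BMO^d(\T^2)$ and $i\in\N_0$,
\[
\bigl\|\Pi\bigl(\Pi^{(0,1)}(\phi,b),\,E^{(1)}_i\,\cdot\,\bigr)\bigr\|_{L^2\to L^2}\,\lesssim\,(i+1)\,\|\phi\|_{\BMO^d}\,\|b\|_{\BMO^d}.
\]
I would prove this by adapting the four-piece decomposition of Lemma~\ref{lemma:core2} using the splitting $f = E^{(1)}_i f + Q^{(1)}_i f$ and invoking the first-variable estimates from Lemma~\ref{lemma:avgrowth}; crucially, since $\Pi^{(0,1)}$ carries no Haar structure in the $t$-variable, only the single factor $(i+1)$ appears. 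Combined with the $\LMO^d_1$ hypothesis this yields the analog of Lemma~\ref{lemma:core2bis}:
\[
\bigl\|\Pi\bigl(\Pi^{(0,1)}(Q^{(1)}_j\phi,b),\,E^{(1)}_i\,\cdot\,\bigr)\bigr\|_{L^2\to L^2}\,\lesssim\,\frac{i+1}{j+1}\,\|\phi\|_{\LMO^d_1}\,\|b\|_{\BMO^d}.
\]

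I would then apply Cotlar's Lemma with the one-variable shells
\[
P^{(1)}_N\,=\,\sum_{j_1=2^N-1}^{2^{N+1}-2}\,\sum_{j_2\ge 0}\Delta_{\vj},
\]
setting $T_N = \Pi\bigl(\Pi^{(0,1)}(\phi,b),\,P^{(1)}_N\,\cdot\,\bigr)$. The relation $T_N T_{N'}^* = 0$ for $N\neq N'$ is immediate from the Haar structure of $\Pi^{(0,1)}$ in~$s$. For $T_N^* T_{N'}$, the identities $P^{(1),\oN}\Pi^{(0,1)}(\phi,b) = \Pi^{(0,1)}(Q^{(1)}_{2^{\oN}-1}\phi,b)$ and $P^{(1),\oN}\Pi_F = \Pi_{P^{(1),\oN}F}$ reduce the estimate to two factors of the core lemma applied to the truncated symbol $Q^{(1)}_{2^{\oN}-1}\phi$, whose $\BMO^d$ norm is $\lesssim 2^{-\oN}\,\|\phi\|_{\LMO^d_1}$. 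Bounding each $P^{(1)}_N$-factor by an $E^{(1)}_{2^{N+1}-1}$-factor yields
\[
\|T_N^* T_{N'}\|\,\lesssim\,2^{-|N-N'|}\,\|\phi\|^2_{\LMO^d_1}\,\|b\|^2_{\BMO^d},
\]
and Cotlar's Lemma concludes. The main technical obstacle I anticipate is the bookkeeping for the one-variable core lemma: the mixed $h_I\otimes\chi_J/|J|$ structure of $\Pi^{(0,1)}$ requires care with signs and restrictions, but the underlying estimates reduce, as in Lemma~\ref{lemma:avgrowth}, to one-parameter John--Nirenberg inequalities.
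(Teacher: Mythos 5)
Your proposal is correct and follows essentially the same route as the paper: for (2)–(3) the paper proves exactly your one-variable core lemma ($\|\Pi(\Pi^{(0,1)}(\phi,b),E^{(1)}_k\,\cdot)\|_{L^2\to L^2}\lesssim (k+1)\|\phi\|_{\BMO^d}\|b\|_{\BMO^d}$, via a $\sigma^{(1)}_k$ analogue of Lemma \ref{lemma:core}), deduces the $(k+1)/(j+1)$ bound from the $\LMO_1^d$ hypothesis, and runs Cotlar's lemma with one-parameter shells $P_N$, obtaining (3) by swapping variables. The only differences are minor: part (1) is handled in the paper by citing \cite{bp} rather than your duality/atomic-decomposition sketch, and in the core lemma the relevant splitting is of the symbol, $\phi=E^{(1)}_k\phi+Q^{(1)}_k\phi$ (equivalently of $\Pi^{(0,1)}(\phi,b)$ in the first variable), rather than of the input $f$, with the $Q^{(1)}_k\phi$ piece estimated through $\|\chi_I P_J b\|_2$ and the $E^{(1)}_k\phi$ piece through $\|m_I b\|_{\BMO^d(\T)}$ from Lemma \ref{lemma:avgrowth}.
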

\begin{proof} (1) was shown in \cite{bp}. To show (2), we will follow
  a simplified version of the ideas of the proof of  Theorem
  \ref{thm:main}.

\begin{lemma} \label{lemma:coreone}  Let $b \in L^2(\T^2)$ and let $k \in \N$. Then
$$
   \|\Pi_b E^{(1)}_{k}\|_{L^2 \to L^2} =   \| \Pi_{\sigma^{(1)}_{k} b}\|_{L^2 \to L^2},
$$
where
$$
   (\sigma^{(1)}_{k}b)_{I,J} =\left\{ \begin{matrix} b_{I,J} & \text{ if } & |I|
   > 2^{-k}\\
 (\sum_{I' \subseteq I} |b_{I',J}|^2)^{1/2}   & \text{ if } &
   |I| = 2^{-k}\\
      0 & \text{ otherwise. }&
                               \end{matrix} \right.
$$
\end{lemma}
\begin{proof} As in Lemma \ref{lemma:core}.
\end{proof}
It remains to prove the following.
\begin{lemma} \label{lemma:core2one}  Let
 $\phi, b \in \BMO^d(\T^2)$, $k \in \N$. Then
$$
   \|\Pi\left(\Pi^{(0,1)}(\phi, b), E^{(1)}_{k}\, \cdot \right)\|_{L^2 \to L^2} \lesssim (k+1) \|\phi\|_{\BMO^d} \|b \|_{\BMO^d}.
$$
\end{lemma}
\begin{proof} We write $E$ for $E^{(1)}$, and $\sigma$ for $\sigma^{(1)}$.
Following the results in Lemma \ref{lemma:coreone}, we estimate
$$
\| \sigma_k( {\Pi^{(0,1)}}(\phi, b))\|_{\BMO^d} \le \| \sigma_k(
  {\Pi^{(0,1)}}({E_k\phi}, b))\|_{\BMO^d}
+ \| \sigma_k(
  {\Pi^{(0,1)}}({Q_k\phi}, b))\|_{\BMO^d}.
$$

We start with the second term.
Since
${\Pi^{(0,1)}}(Q_k\phi, b)$ has no nontrivial Haar terms in the first variable for intervals $I$ with $|I| >2^{-k}$,
$$
    \sigma_k( {\Pi^{(0,1)}}(Q_k\phi, b)) =
       \sum_{J \in \D} \sum_{|I| = 2^{-k}} h_I(s)  (\sum_{I' \subseteq I} |\phi_{I'J}|^2 |m_{I'} b_J|^2)^{1/2}
             \frac{\chi_J}{|J|}(t),
$$
and this has only nontrivial Haar terms in the first variable for intervals $I$ with
$|I|=2^{-k}$. As before, the $\BMO^d$ condition now only has to be considered on rectangles of the form $R=I \times J$, $|I| = 2^{-k}$. Thus
\begin{eqnarray*}
   \| P_R \sigma_k( {\Pi^{(0,1)}}(Q_k\phi, b))\|_2^2
   &=&  \| P_R \sigma_k( {\Pi^{(0,1)}}({P_RQ_k\phi}, b))\|_2^2 \\
& \le &  \| \sigma_k( {\Pi^{(0,1)}}({P_RQ_k\phi}, b))\|_2^2 \\
&=&  \|  {\Pi^{(0,1)}}({P_RQ_k\phi}, b)\|_2^2 \\
&=& \| {\Pi^{(0,1)}}({P_R \phi}, \chi_I(s) P_J b) \|^2_2 \\
&\lesssim& \| P_R \phi\|_{\BMO^d} \| \chi_I P_J b \|_{2}^2\\
        &\lesssim& (k+1)^2 |R|\|\phi\|_{\BMO^d}^2 \|b\|^2_{\BMO^d}. \\
\end{eqnarray*}

Now we have to deal with the first term $ \| \sigma_k(
  {\Pi^{(1,0)}}({E_k\phi}, b))\|_{\BMO^d}$. Let $\Omega
  \subseteq \T^2$ be open and write $\J_I = \cup_{J \in \D, I \times J
  \subseteq \Omega } J $ for $I \in \D$. Then
\begin{eqnarray*}
   \| P_\Omega \left(\sigma_k( {\Pi^{(0,1)}}(\phi, b))\right)\|_2^2
   &=&  \| P_{\O} \sigma_k ({\Pi^{(0,1)}}(P_\Omega E_k\phi, b))\|_2^2 \\
& \le &  \| \sigma_k ({\Pi^{(0,1)}}({P_\Omega E_k\phi}, b))\|_2^2
= \|  {\Pi^{(0,1)}}({P_\Omega E_k\phi}, b)\|_2^2 \\
   &=& \|\sum_{I \in \D, |I| > 2^{-k}} \sum_{J \in \D: I \times J \subseteq \Omega }  h_{I}(s) \frac{\chi_{J}}{|J|}(t)
     \phi_{IJ} m_{I} b_{J} \|_2^2\\
 &=& \sum_{I \in \D, |I| > 2^{-k}} \|\sum_{J \subseteq \J_I } \frac{\chi_{J}}{|J|}(t)
     \phi_{IJ} m_{I}b_{J} \|_2^2\\
 &=& \sum_{I \in \D,  |I| > 2^{-k}} \|\Delta_{m_I b} P_{\J_I} \phi_I \|_2^2\\
&\lesssim& \sum_{I \in \D,  |I| > 2^{-k}} \|m_I b\|^2_{\BMO^d}  \|P_{\J_I} \phi_I \|_2^2\\
&\lesssim& (k+1)^2 \|b\|^2_{\BMO^d} \sum_{I \in \D}  \|P_{\J_I} \phi_I \|_2^2\\
&\lesssim& (k+1)^2 \|b\|^2_{\BMO^d}\|P_\Omega \phi \|_2^2\\
&\lesssim&  (k+1)^2 \|b\|^2_{\BMO^d} \|\phi\|^2_{\BMO^d}|\Omega|
\end{eqnarray*}
by Lemma \ref{lemma:avgrowth}.
\end{proof}
As in the last section, we immediately deduce
\begin{equation}
  \|\Pi\left(\Pi^{(0,1)}(Q^{(1)}_j \phi, b), E^{(1)}_{k}\, \cdot \right)\|_{L^2 \to L^2} \lesssim \frac{k+1}{j+1} \|\phi\|_{\LMO_1^d} \|b \|_{\BMO^d}.
\end{equation}

The remainder of the proof of (2) is now exactly analogous to the proof of
Therem \ref{thm:main}, defining $T_N = \Pi({\Pi^{(0,1)}}(\phi, \cdot), P_N\cdot)$, where
  $P_{N} = \sum_{i=2^N-1}^{2^{N+1}-2}
   \Da^{(1)}_{i}$,
and using Cotlar's Lemma in one parameter. Finally, (3) follows by simply
switching variables.
\end{proof}


\section{Generalization to more than two variables}   \label{sec:multi}
The results of Section \ref{sec:main} and \ref{sec:others} generalize easily to more that two variables. We will just state
the results here, the proofs
are very similar to those in the previous sections.

Let $N \in \N$, let $\RR=\{R=R_1\times \cdots \times R_N\in \T^N: R_j\in \D\}$ the $N$-fold Cartesian product of
the set of dyadic intervals $\D$, and let $(h_R)_{R \in \R}$ denote the corresponding product Haar basis of $L_0^2(\T^N)$. Recall that a function  $b \in L^2(\T^N)$
is in the dyadic product BMO space $\BMO^d(\T^N)$, if
$$
   \|b\|^2_{\BMO^d}= \sup_{\Omega \in \T^N \text{open}} \frac{1}{|\Omega|}\sum_{R \subset \Omega} |b_R|^2 < \infty.
$$

\begin{definition} \label{def:multLMO}
Let $\phi \in L^2(\T^N)$, $\vec {\delta}=(\delta_1,\cdots,\delta_N)$, $\delta_j\in \{0,1\}$. Then $ \phi \in \LMO_{\vec {\delta}}^d(\T^N)$, if and only if there exists $C >0$ such that for each
dyadic rectangle $R= R_1 \times R_2 \times \cdots \times R_N \in \D^N$ and each open set $\Omega \subseteq R $,
$$
   \frac{\log(\frac{4}{|R_{\delta_1}|})^2 \cdots  \log(\frac{4}{|R_{\delta_N}|})^2}{|\Omega|} \sum_{Q \in \RR, Q \subseteq \Omega} |\phi_Q|^2 \le C,
$$
where
$$
R_{\delta_j}=\left\{ \begin{matrix} R_j &\text{if}& \delta_j=0\\
                    \T &\text{otherwise.}&
                    \end{matrix}\right.
$$
\end{definition}

When $\vec {\delta}=\vec{0}=(0,\cdots,0)$, $\LMO_{\vec {\delta}}^d(\T^N)$ corresponds to the generalization of $\LMO^d(\T^2)$ and is denoted by
$\LMO^d(\T^N)$. One easily sees that for
$\vec {\delta}=(1,\cdots,1)= \vec{1}$, the corresponding space is just the space $\BMO^d(\T^N)$.

As before, we consider paraproducts as defined in (\ref{paraprodgene2}) for triples $(\veps, \vda, \vba)$ with $\vec {\varepsilon}=(0,\cdots,0)$, and
$$
\delta_j=\left\{ \begin{matrix} 1 &\text{if}& \beta_j=0\\
                    0 &\text{otherwise.}&
                    \end{matrix}\right.
$$

For simplicity, we write
\begin{equation}\label{paraprodgeneN}\Pi^{\vec {\beta}}_\phi f=B_{\vec {\varepsilon},\vec {\delta},\vec {\beta}}(\phi,f):=\sum_{R\in \RR}\phi_R \langle f,h^{\vec {\delta}}_R\rangle h^{\vec {\beta}}_R.\end{equation}

As in the case $N=2$, $\Pi$ and $\Delta$ are given by $\Pi^{\vba}$ for $\vec {\beta}=(0,\cdots,0)$ and $\vec {\beta}=(1,\cdots,1)$, respectively.

Here is the result on the boundedness of $\Pi^{\vba}_\phi$ on $\BMO^d(\T^N)$.
 \begin{theorem}   \label{thm:multothers}
Let $\phi \in L_0^2(\T^N)$, $\vec {\beta}=(\beta_1,\cdots,\beta_n)$, $\beta_j\in \{0,1\}$. Then

\begin{enumerate}
\item $\Pi^{(0, \dots,0)}_\phi = \Pi_\phi:\BMO^d(\T^N) \rightarrow \BMO^d(\T^N)$ is bounded, if and only if $ \phi \in \LMO^d(\T^N)$.
Moreover, $$\|\Pi_\phi\|_{\BMO^d \rightarrow \BMO^d} \approx\|\phi\|_{\LMO^d}.$$
\item $\Pi^{(1,\dots,1)}_\phi = \Delta_\phi: \BMO^d(\T^N) \rightarrow \BMO^d(\T^N)$ is bounded, if and only if $ \phi \in \BMO^d(\T^N)$.
Moreover, $$\|\Delta_\phi\|_{\BMO^d \rightarrow \BMO^d} \approx\|\phi\|_{\BMO^d}.$$
\item For $\vec {\beta}\neq (0,\cdots,0),  (1,\cdots,1)$,\\
$\Pi^{\vec {\beta}}_\phi:\BMO^d(\T^N) \rightarrow \BMO^d(\T^N)$ is bounded if $ \phi \in \LMO_{\vec {\beta}}^d(\T^N)$.
Moreover, $$\|\Pi^{\vec {\beta}}\phi\|_{\BMO^d(\T^N) \rightarrow \BMO^d(\T^N)}
  \lesssim \|\phi\|_{\LMO_{\vec {\beta}}^d}.$$
\end{enumerate}

\end{theorem}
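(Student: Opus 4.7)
The plan is to lift the proofs of Theorems \ref{thm:main} and \ref{thm:others} to $N$ parameters, with the combinatorial bookkeeping expanded to $2^N$ cases. First I would generalize Lemma \ref{lemma:avgrowth}: by $H^1_d(\T^N)$--$\BMO^d(\T^N)$ duality and the tensor product estimate $\|\chi_R/|R|\|_{H^1_d(\T^N)} \lesssim \prod_{i=1}^N\log(4/|R_i|)$, one obtains the sharp growth bounds
\[
  |m_R b|\lesssim\prod_{i=1}^N(k_i+1)\|b\|_{\BMO^d},\qquad\|\chi_R b\|_2^2\lesssim|R|\prod_{i=1}^N(k_i+1)^2\|b\|_{\BMO^d}^2,
\]
for $R\in\RR_{\vk}$, along with the analogous estimates for partial averages and for mixed expressions $\|\chi_{R_S}P_{R_{S^c}}b\|_2$, $S\subseteq\{1,\dots,N\}$. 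These are proved by iterating the two-parameter arguments or by direct reduction to the one-dimensional John--Nirenberg inequality in each variable.

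Next I would introduce the $N$-parameter operator $\sigma_{\vk}$: for each subset $S\subseteq\{1,\dots,N\}$ and each $R$ with $|R_i|=2^{-k_i}$ for $i\in S$ and $|R_i|>2^{-k_i}$ for $i\notin S$, set
\[
  (\sigma_{\vk} b)_R = \left(\sum_{R'_i\subseteq R_i \,(i\in S),\; R'_i=R_i\,(i\notin S)}|b_{R'}|^2\right)^{1/2},
\]
and $(\sigma_{\vk} b)_R = 0$ if any $|R_i|<2^{-k_i}$. The same orthogonality/square-function calculation as in Lemma \ref{lemma:core} yields $\|\Pi_b E_{\vk}\|_{L^2\to L^2}=\|\Pi_{\sigma_{\vk}b}\|_{L^2\to L^2}$, after splitting the Haar expansion of $\Pi_b E_{\vk} f$ into $2^N$ regimes according to the relative order of $j_i$ and $k_i$.

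The central step is the $N$-variable analogue of Lemma \ref{lemma:core2}:
\[
   \|\Pi(\Pi(\phi,b),E_{\vk}\,\cdot)\|_{L^2\to L^2}\lesssim\prod_{i=1}^N(k_i+1)\,\|\phi\|_{\BMO^d}\|b\|_{\BMO^d}.
\]
To prove it I would apply the $N$-parameter resolution $I=\sum_{S\subseteq\{1,\dots,N\}}\prod_{i\in S}Q^{(i)}_{k_i}\prod_{i\notin S}E^{(i)}_{k_i}$ inside $\sigma_{\vk}(\Pi_\phi b)$; on each piece $\sigma_{\vk}$ restricts the $\BMO^d$-test to rectangles with $|R_i|=2^{-k_i}$ for $i\in S$, reducing matters to estimates of $\|\phi\|_{\BMO^d}^2|R|^{-1}\|\chi_{R_S}P_{R_{S^c}}b\|_2^2$, controlled by the sharp restriction bound above. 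Substituting $Q_{\vj}\phi$ for $\phi$ and invoking Definition \ref{def:multLMO} gives the off-diagonal estimate
\[
   \|\Pi(\Pi(Q_{\vj}\phi,b),E_{\vk}\,\cdot)\|_{L^2\to L^2}\lesssim\prod_{i=1}^N\frac{k_i+1}{j_i+1}\,\|\phi\|_{\LMO^d}\|b\|_{\BMO^d}.
\]

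The sufficiency half of (1) then follows from Cotlar's Lemma applied to $T_{\vec N}=\Pi(\Pi(\phi,b),\cdot)P_{\vec N}$, with $P_{\vec N}=\prod_i\sum_{j_i=2^{N_i}-1}^{2^{N_i+1}-2}\Delta^{(i)}_{j_i}$ and $\vec N\in\N_0^N$: exactly as in Theorem \ref{thm:main}, $T_{\vec N}T^*_{\vec N'}=0$ unless $\vec N=\vec N'$, and the algebraic identity $P^{\vec N}\Pi(\Pi(\phi,b),\cdot)=\Pi(\Pi(P^{\vec N}\phi,b),\cdot)$ together with the off-diagonal decay above gives $\|T^*_{\vec N}T_{\vec N'}\|\lesssim\prod_i 2^{-|N_i-N'_i|}\|\phi\|^2_{\LMO^d}\|b\|^2_{\BMO^d}$, summable to an absolute constant. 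Necessity uses a tensor product $b=b_1\otimes\cdots\otimes b_N$ of one-variable $\BMO^d$ functions with $b_i|_{R_i}\equiv(k_i+1)$, exactly mirroring the $N=2$ case and Proposition \ref{prop:char}. Part (2) is quoted from \cite{bp}. For (3), one uses the mixed operator $\sigma^{\vba}_{\vk}$ that collapses Haar coefficients only in variables $i$ with $\beta_i=0$, proves the analogue of Lemma \ref{lemma:core2one} by splitting into $E$ and $Q$ parts in those variables, and applies Cotlar's Lemma in the $|\{i:\beta_i=0\}|$ coordinates. The main obstacle is verifying the $N$-variable Lemma \ref{lemma:core2}: balancing the $2^N$ decomposition pieces against the correct product of weights $\prod_i(k_i+1)$ from Lemma \ref{lemma:avgrowth} so that the off-diagonal estimate retains the product form needed for Cotlar summation in all $N$ coordinates.
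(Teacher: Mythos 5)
Your proposal is correct and takes essentially the same route the paper intends: the paper offers no detailed proof of this theorem, stating only that the two-parameter arguments of Sections \ref{sec:main} and \ref{sec:others} ``generalize easily'', and your outline (the $N$-parameter version of Lemma \ref{lemma:avgrowth}, the generalized $\sigma_{\vk}$ of Lemma \ref{lemma:core}, the $2^N$-piece analogue of Lemma \ref{lemma:core2} with off-diagonal decay $\prod_i\frac{k_i+1}{j_i+1}$, Cotlar's lemma over $\N_0^N$, tensor-product test functions for necessity, and the collapse in the variables with $\beta_i=0$ for part (3)) is a faithful expansion of exactly that generalization. The only minor imprecision is that in the $N$-variable analogue of Lemma \ref{lemma:core2} the mixed pieces are most directly bounded through $\frac{1}{|R|}\|\Pi_\phi(\chi_R b)\|_2^2\lesssim\|\phi\|_{\BMO^d}^2\frac{1}{|R|}\|\chi_R b\|_2^2$, as in terms II--IV of the paper's two-parameter proof, rather than through $\|\chi_{R_S}P_{R_{S^c}}b\|_2$ (which is the right object for the mixed paraproducts of part (3), mirroring Lemma \ref{lemma:core2one}); both quantities satisfy the growth bounds you state, so the argument goes through unchanged.
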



\section{Hankel operators, commutators and dyadic shifts}      \label{sec:hankel}


In this section, we are interested in application of the previous results to boundedness of iterated commutators with Hilbert transform on $H^1(\R^N)$, i.e the predual of $\BMO(\R^N)$ as defined by A. Chang and R. Fefferman \cite{ChFef1}. Again, for simplicity, we restrict our presentation to the two dimensional case, as the general case follows the same way.

We will be writing $H_1$ and $H_2$ for the Hilbert transform in the first and second variable respectively. Let us first recall the following result.
\begin{theorem}[\cite{ChFef1, fergsad, ferglac}] Let $b
  \in \BMO(\R^2)$. Then
$$
   [H_1, [H_2, \phi]]: L^2(\R^2) \rightarrow L^2(\R^2)
$$
is bounded, and $\|[H_1, [H_2, \phi]]\|_{L^2 \rightarrow L^2} \approx
\|\phi\|_{\BMO}$.
\end{theorem}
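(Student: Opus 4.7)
The plan is to prove the equivalence in two directions, exploiting the paraproduct machinery of Sections \ref{sec:main} and \ref{sec:others}. For the upper bound $\|[H_1,[H_2,\phi]]\|_{L^2\to L^2}\lesssim\|\phi\|_{\BMO}$, I would use Petermichl's representation of the Hilbert transform as an average of dyadic shift operators associated with translated and dilated dyadic grids on $\R$. Writing such a shift in the $j$th variable as $\mathrm{Sh}_j^{(r_j)}$, bilinearity of the commutator yields
$$
[H_1,[H_2,\phi]]=\iint [\mathrm{Sh}_1^{(r_1)},[\mathrm{Sh}_2^{(r_2)},\phi]]\,d\mu(r_1)\,d\mu(r_2),
$$
so it suffices to bound the inner iterated commutator on $L^2$ with a constant depending linearly on $\|\phi\|_{\BMO^d}$ for the corresponding pair of dyadic grids, uniformly in the grid parameters.

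Each dyadic shift is a Haar multiplier of norm one, so substituting the Haar expansions of $\phi$ and $f$ into $[\mathrm{Sh}_1,[\mathrm{Sh}_2,\phi]]f$ and reorganizing by the relative position of the Haar supports in each variable reduces the iterated commutator to a sum of terms of the general form $B_{\veps,\vda,\vba}(\phi,f)$ from (\ref{paraprodgene2}). The four cases corresponding to $\vec{\beta}\in\{0,1\}^2$ recover exactly the paraproducts $\Pi^{(0,0)}$, $\Pi^{(1,1)}=\Delta$, $\Pi^{(1,0)}$, and $\Pi^{(0,1)}$; each of these is bounded on $L^2(\T^2)$ when $\phi\in\BMO^d(\T^2)$, which is the $L^2$ version of the $\BMO\to\BMO$ results proved earlier and follows from $H^1_d$--$\BMO^d$ duality together with standard Carleson-embedding arguments for the product square function. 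Summing these pieces, then averaging over the grid parameters and invoking the announced comparison between $\BMO(\R^2)$ and the average of dyadic product $\BMO^d$ norms, produces the upper bound.

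For the lower bound $\|\phi\|_{\BMO}\lesssim\|[H_1,[H_2,\phi]]\|_{L^2\to L^2}$, I would test $[H_1,[H_2,\phi]]$ against analytic/anti-analytic tensor products, relying on the weak factorization of $H^1(\R^2)$ due to Ferguson and Lacey: every $a\in H^1(\R^2)$ admits a representation $a=\sum_k g_kh_k$ with each factor analytic or anti-analytic in a prescribed pattern of variables and $\sum_k\|g_k\|_2\|h_k\|_2\lesssim\|a\|_{H^1}$. Since Riesz projections act as signed identities on such factors, a rearrangement of the pairing $\langle\phi,a\rangle$ identifies it with a sum of inner products $\langle[H_1,[H_2,\phi]]g_k,\overline{h_k}\rangle$, yielding the lower bound via $H^1(\R^2)$--$\BMO(\R^2)$ duality.

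The main obstacle will be making the dyadic reduction tight. The continuous Hilbert transform is not a single dyadic shift, so one needs uniformity of the paraproduct bounds across all translated and dilated dyadic grids, together with a nontrivial comparison result asserting that the continuous $\BMO(\R^2)$ norm is controlled by a suitable average of dyadic $\BMO^d$ norms. This is precisely the ``dyadic versus continuous product $\BMO$'' ingredient flagged in the introduction, and I expect it to be the most technical step. The weak-factorization input for the lower bound, while deep, can be quoted directly from \cite{ferglac}.
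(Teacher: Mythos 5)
First, a point of comparison: the paper does not prove this theorem at all — it is quoted as background with citations to \cite{ChFef1, fergsad, ferglac} (the upper bound goes back essentially to \cite{fergsad}, the lower bound is the main theorem of \cite{ferglac}), and the paper's own contribution in Section \ref{sec:hankel} is the different, $\BMO\to\BMO$ variant. Judged on its own, your sketch has a genuine gap in the upper bound. The claim that expanding $[\mathrm{Sh}_1,[\mathrm{Sh}_2,\phi]]f$ in the Haar basis produces only the four paraproducts $\Pi^{\vec{\beta}}$ with $\vec{\varepsilon}=(0,0)$ is not correct: in each variable the relative position of the Haar supports of $\phi$ and $f$ has three regimes (strictly finer, equal, strictly coarser), so the multiplication operator splits into nine bi-parameter pieces, as in the paper's proof of Theorem \ref{thm:dyshift}. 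Besides $\Pi$, $\Delta$, $\Pi^{(1,0)}$, $\Pi^{(0,1)}$ one gets the pieces $R_{R_\phi}$, $\Pi_{R_\phi}$, $\Delta_{R_\phi}$ and their symmetric counterparts, in which $\phi$ is averaged in at least one variable; these are Haar-multiplier-like, with coefficients such as $m_{I\times J}\phi$, and they are \emph{not} bounded on $L^2$ for general $\phi\in\BMO(\R^2)$, since product BMO does not control $\sup_R |m_R\phi|$. They become bounded only through the cancellation of the commutator with the shifts (compare the computation of $[S^{(1)},[S^{(2)},R_{R_\phi}]]h_{I\times J}$ in the paper, which yields double differences of averages controlled by rectangular BMO). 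This cancellation step is the heart of the upper bound and is missing from your reduction. Two further remarks: the $L^2$ boundedness of the mixed paraproducts with product BMO symbol is not a ``standard Carleson embedding'' but a nontrivial bi-parameter theorem (\cite{laceyjason, muscalu}); and since the target here is $L^2$ rather than $\BMO$, the deep dyadic-versus-continuous comparison you single out as the main obstacle is not needed — only the easy direction, that $\BMO(\R^2)$ embeds uniformly into every $\BMO^{d,\vec{\alpha},\vec{r}}(\R^2)$, is required before averaging over grids.

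The lower bound as you propose it is circular. In \cite{ferglac} the weak factorization of $H^1(\R^2)$ is a corollary, obtained by duality, of precisely the commutator lower bound $\|\phi\|_{\BMO}\lesssim\|[H_1,[H_2,\phi]]\|_{L^2\to L^2}$ that you are trying to prove; it has no independent proof there. So either you cite the commutator theorem itself (which is what the paper does, and is legitimate since the statement is presented as a known result), or you must reproduce the induction-on-scales argument of \cite{ferglac}, which is the genuinely hard part of the statement and cannot be replaced by the paraproduct machinery of Sections \ref{sec:main} and \ref{sec:others}.
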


One would like to characterize boundedness of commutators on the endpoint spaces $H^1(\R^2)$ and $\BMO(\R^2)$ in an analogous fashion  in terms of a suitable notion of
$\LMO(\R^2)$. However, this is not possible even in one parameter (see e.~g.~\cite{JPS}, Remark 4.1).
The reason is the slow decay of the kernel $1/x$  of the Hilbert transform. This means that for  $\phi$ with compact support and $f\in H^1(\R)$ an atom,
  $\phi Hf$ will be integrable, but $\phi f$ will in general not have average zero, so $H(\phi f)$ behaves like $1/x$ at $\infty$ and is therefore not integrable at $\infty$.
  On the dual side, this amounts to saying that one should only consider  functions with compact support in $\BMO(\R^2)$.

   In terms of our main result on paraproducts Theorem \ref{thm:main}, one sees easily that there is no good estimate for averages of $\BMO^d(\R)$ functions, and the theorem does not hold for $\BMO^d(\R^2)$.
   However, we will prove a local estimates for paraproducts and commutators. Our tools are adapated for the case of $\R^N$. It would be interesting to see the result in
   the case of the polydisc.

 Let
  \begin{equation}\label{BMOrestrict}
 \BMO([0,1]^2):=\{f\in \BMO(\R^2): \supp f\subseteq [0,1]^2\}.
 \end{equation}
and
$$
\BMO^d([0,1]^2) = \{ f \in \BMO^d(\R^2): \supp f\subseteq [0,1]^2\}.
$$
 We say that $ f \in \LMO^d([0,1]^2) $, if $f \in \BMO^d([0,1]^2)$ and there exists $C>0$ with
$$
   \|Q_{\vk} f\|_{\BMO^d(\R^2)} \le C\frac{1}{(k_1+1)(k_2+1)} \text{ for } \vk=(k_1, k_2) \in \N_0 \times \N_0.
$$
The spaces  $\LMO_{\vba}^d([0,1]^2) $ are defined correspondingly.
Here are our local estimates  on paraproducts.

 \begin{theorem}   \label{thm:locmultothers}
Let $\phi \in L_0^2([0,1]^2)$, $\vec {\beta}=(\beta_1,\beta_2)$, $\beta_j\in \{0,1\}$. Then

\begin{enumerate}
\item $\Pi^{(0, 0)}_\phi = \Pi_\phi:\BMO^d([0,1]^2) \rightarrow \BMO^d(\R^2)$ is bounded, if and only if $ \phi \in \LMO^d([0,1]^2)$.
Moreover, $$\|\Pi_\phi\|_{\BMO^d([0,1]^2) \rightarrow \BMO^d(\R^2)} \approx\|\phi\|_{\LMO^d([0,1]^2)}.$$
\item $\Pi^{(1,1)}_\phi = \Delta_\phi: \BMO^d([0,1]^2) \rightarrow \BMO^d(\R^2)$ is bounded, if and only if $ \phi \in \BMO^d([0,1]^2)$.
Moreover, $$\|\Delta_\phi\|_{\BMO^d([0,1]^2) \rightarrow \BMO^d(\R^2)} \approx\|\phi\|_{\BMO^d([0,1]^2)}.$$
\item For $\vec {\beta}\neq (0,0),  (1,1)$,\\
$\Pi^{\vec {\beta}}_\phi:\BMO^d([0,1]^2) \rightarrow \BMO^d(\R^2)$ is bounded if $ \phi \in \LMO_{\vec {\beta}}^d([0,1]^2)$.
Moreover, $$\|\Pi^{\vec {\beta}}\phi\|_{\BMO^d([0,1]^2) \rightarrow \BMO^d(\R^2)}
  \lesssim \|\phi\|_{\LMO_{\vec {\beta}}^d([0,1])}^2.$$
\end{enumerate}

\end{theorem}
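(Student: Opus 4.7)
The plan is to reduce the local theorem to the polydisc results, Theorems \ref{thm:main} and \ref{thm:multothers}, by exploiting the support hypothesis on $\phi$. The first step is to establish the key observation that if $\phi \in L_0^2([0,1]^2)$, then $\phi_R = 0$ for every dyadic rectangle $R \in \RR$ with $R \not\subseteq [0,1]^2$. Indeed, a dyadic rectangle $R = I \times J$ that meets $[0,1]^2$ without being contained in it must have one of its sides, say $I$, strictly containing the dyadic unit interval $[0,1)$. The only dyadic parents of $[0,1)$ are the intervals $[0, 2^n)$, $n \ge 1$, whose midpoints all lie outside $[0,1)$; consequently $h_I$ is constant on $[0,1)$. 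Thus $\langle \phi, h_I \otimes h_J \rangle$ reduces to a constant multiple of $\int_\R \bigl(\int_0^1 \phi(s,t)\, ds\bigr)\, h_J(t)\, dt$, which vanishes by the mean-zero condition in the first variable. An identical argument applies to $b \in \BMO^d([0,1]^2)$.

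A direct consequence is that $\Pi^{\vec{\beta}}_\phi b$ is supported in $[0,1]^2$ and its Haar expansion is carried by rectangles inside $[0,1]^2$. Moreover, for any function $g$ with Haar coefficients vanishing outside $[0,1]^2$, the identity $\sum_{R \subseteq \Omega}|g_R|^2 = \sum_{R \subseteq \Omega \cap [0,1]^2}|g_R|^2$ combined with $|\Omega \cap [0,1]^2| \le |\Omega|$ shows that the supremum defining $\|g\|_{\BMO^d(\R^2)}$ is attained on open sets $\Omega \subseteq [0,1]^2$. Hence the $\BMO^d(\R^2)$ norm of such a $g$ coincides with the dyadic BMO norm computed inside the bidisc $[0,1]^2$.

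After these reductions, the entire machinery of Sections \ref{sec:main} and \ref{sec:others} goes through verbatim with $\T^2$ replaced by $[0,1]^2$. The growth estimates of Lemma \ref{lemma:avgrowth}, the Haar-algebraic identities in Lemmas \ref{lemma:core}--\ref{lemma:core2bis} together with their one-parameter analogues \ref{lemma:coreone}--\ref{lemma:core2one}, the definitions of $E_{\vj}$, $Q_{\vj}$, $Q^{(i)}_j$, and the Cotlar's Lemma argument are all dyadic-combinatorial in nature and, once the Haar vanishing property is in place, involve only rectangles contained in $[0,1]^2$. For the necessity in items (1) and (2), the test function from the proof of Theorem \ref{thm:main} can still be chosen of tensor-product form $b_1 \otimes b_2$ with each $b_i$ supported on $[0,1]$, hence lying in $\BMO^d([0,1]^2)$.

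The main obstacle is verifying the Haar-vanishing property and then tracking carefully that all operators and projections ($E_{\vj}$, $Q_{\vj}$, the generation projections $P_{N,K}$, and the paraproducts themselves) respect the support condition, so that no artefacts arise from the dyadic rectangles of $\R^2$ that do not embed into $[0,1]^2$. Once this bookkeeping is carried out, no new analytic input is required beyond what has already been proved in the polydisc setting.
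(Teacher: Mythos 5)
Your reduction hinges on the claim that every Haar coefficient of $\phi$ and of $b$ vanishes on dyadic rectangles not contained in $[0,1]^2$, and this is where the argument breaks down. For $b$ the claim is simply false: by (\ref{BMOrestrict}) and the line following it, $\BMO^d([0,1]^2)$ consists of functions in $\BMO^d(\R^2)$ with $\supp b\subseteq[0,1]^2$ and carries no mean-zero condition, so for instance $b=\chi_{[0,1]^2}$ belongs to it while $\langle b,h_{[0,2)}\otimes h_{[0,2)}\rangle=1/2\neq 0$. For $\phi$ the claim only holds if one reads $L^2_0([0,1]^2)$ as demanding vanishing partial means in each variable; but the space $\LMO^d([0,1]^2)$ that the theorem actually characterizes is defined purely by the support condition together with the decay of $\|Q_{\vk}\phi\|_{\BMO^d(\R^2)}$, so the symbols one must treat in general do have nontrivial Haar terms on rectangles such as $[0,2^n)\times J$ that stick out of the unit square. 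This is exactly why the paper's proof splits the operator by scale into the four pieces $P_{(0,1)^2}T$, $P_{(0,1)\times(0,1)^c}T$, $P_{(0,1)^c\times(0,1)}T$, $P_{(0,1)^c\times(0,1)^c}T$, and proves the additional Lemma \ref{lemma:technicalend} (plus a separate argument for the coarse-coarse piece) for the contributions coming from large scales; in your scheme these contributions are silently set to zero.

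Second, even on the part of the spectrum lying inside the square, the assertion that ``the machinery of Sections \ref{sec:main} and \ref{sec:others} goes through verbatim'' is not justified, because $b$ is not (a restriction of) a function in $\BMO^d(\T^2)$: its partial means in each variable need not vanish, so Lemma \ref{lemma:avgrowth} --- whose proof uses the $H^1_d(\T^2)$--$\BMO^d(\T^2)$ duality for mean-zero torus functions --- does not apply to it as stated. The quantities driving the whole argument, namely $|m_Rb|$, $\|m_Ib\|_{\BMO}$, $\|\chi_Rb\|_2$ and $\|\chi_IP_Jb\|_2$, must be re-estimated for functions that are merely supported in $[0,1]^2$ with finite $\BMO^d(\R^2)$ norm; this is the content of Lemma \ref{lemma:locgrowth} (and its dyadic analogue), proved by a periodic-extension argument that genuinely uses the support hypothesis, and it is the essential new analytic input of the local theorem --- recall the paper notes that the statement fails outright on $\BMO^d(\R^2)$. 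Your proposal never addresses these bounds, so the claimed verbatim transfer is a genuine gap rather than bookkeeping.
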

The theorem relies only on the appropriate version of Lemma \ref{lemma:avgrowth}, and a slight change of the decomposition of the identity in the proofs of Theorem
\ref{thm:main} and \ref{thm:others}.

\begin{lemma}  \label{lemma:locgrowth} For a bounded (not necessarily dyadic) interval $I \subset \R$, let
$$
         s(I) =  \left\{ \begin{matrix} {\log|I|^{-1} +1}  & \text{ for } & |I| \le 1 \\
                                      1   & \text{ for } & |I| >1. \end{matrix}   \right.
$$
For a bounded (not necessarily dyadic) axis-parallel rectangle $R = I \times J \subset \R^2$, let $s(R)=s(I) s(J)$.

 Then for each $b \in \BMO([0,1]^2)$ and each rectangle $R = I \times J \subset \R^2$,
 \begin{enumerate}
\item
 $$
  |m_R b| \lesssim s(R) \|b\|_{\BMO(\R^2)}; 
$$
\item
$$
  \|m_I b\|_{\BMO(\R)} \lesssim s(I) \|b\|_{\BMO(\R^2)}; 
$$
\item
$$
    \|\chi_R b \|_2^2 \lesssim s(R)^2 |R| \|b\|^2_{\BMO(\R^2)}; 
$$
\item
$$
    \|\chi_I P_J b \|_2^2 \lesssim s(I)^2  |I| |J|  \|b\|^2_{\BMO(\R^2)} 
$$
\item
$$
   \|\chi_I m_J b \|_2^2 \lesssim s(J)^2 s(I)^2  |I|  \|b\|^2_{\BMO(\R^2)}
$$
\end{enumerate}
and this is sharp. Here, $P_Jb(s,t) = \chi_J(t) (b(s,t) - m_J b(s))$.
\end{lemma}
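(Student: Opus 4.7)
The proof follows closely the structure of Lemma \ref{lemma:avgrowth}, with two additional ingredients grafted onto it: (a) a passage from the continuous to the dyadic setting, and (b) a reduction of rectangles $R=I\times J$ that may extend outside $[0,1]^2$ to rectangles of side-lengths at most $1$. This second reduction is precisely what turns the dyadic logarithmic factor $\log(4/|I|)$ into the function $s(I)$: when $|I|>1$ the support condition $\supp b\subseteq[0,1]^2$ forces $m_Ib(t)=\frac{1}{|I|}\int_{I\cap[0,1]}b(s,t)\,ds$, and the extra prefactor $|I\cap[0,1]|/|I|\le 1/|I|$ absorbs the would-be logarithmic loss and produces $s(I)=1$; for $|I|\le 1$ one has $s(I)\approx\log(4/|I|)$ and the estimate matches the dyadic one directly.

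Concretely, the support reduction is carried out first: with $I^\sharp=I\cap[0,1]$, $J^\sharp=J\cap[0,1]$, $R^\sharp=I^\sharp\times J^\sharp$, the identities $\chi_Rb=\chi_{R^\sharp}b$ and $m_Rb=(|R^\sharp|/|R|)\,m_{R^\sharp}b$ reduce each of (1)--(5) to the case $R\subseteq[0,1]^2$, with the ratio $|R^\sharp|/|R|$ exactly accounting for the decay factor in cases where $|I|$ or $|J|$ exceeds $1$. For $R\subseteq[0,1]^2$, I would cover $I$ and $J$ by $O(1)$ translated dyadic intervals of comparable length, reducing the problem to averages and $L^2$ norms over dyadic rectangles of side $\le 1$, where Lemma \ref{lemma:avgrowth} (applied after identifying $[0,1]^2$ with $\T^2$ in the usual way) yields the desired bounds with the factor $\log(4/|I|)\log(4/|J|)\approx s(I)s(J)$. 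This step uses the fact that the continuous product $\BMO(\R^2)$ norm of $b$ controls its dyadic product $\BMO^d$ norm for any fixed grid. Estimates (3) and (4) then follow from (1) and (2) via the decomposition $\chi_Rb=P_Rb+\chi_R m_Ib+\chi_R m_Jb-\chi_R m_Rb$ exactly as in Lemma \ref{lemma:avgrowth}, using the one-dimensional John--Nirenberg inequality for the mixed terms. The estimate (5), which has no counterpart in Lemma \ref{lemma:avgrowth}, follows from (2) applied in the second variable, giving $\|m_Jb\|_{\BMO(\R)}\lesssim s(J)\|b\|_{\BMO(\R^2)}$ with $\supp m_Jb\subseteq[0,1]$, combined with the one-dimensional analogue of (3) applied to the function $m_Jb$.

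Sharpness is inherited from the one-dimensional case by taking tensor products $b(s,t)=b_1(s)b_2(t)$ with $b_j\in\BMO(\R)$ supported in $[0,1]$ whose averages saturate $|m_Ib_j|\approx\log|I|^{-1}$ on small intervals $I\subseteq[0,1]$; for the one-variable construction see e.g.~\cite{benoit}. The main technical difficulty I anticipate is step (a): verifying that the continuous product $\BMO(\R^2)$ norm indeed dominates the dyadic $\BMO^d$ norm on the relevant class of functions, since in the product setting the standard one-parameter argument comparing dyadic and continuous BMO is not immediate. However, because all reductions are made to rectangles contained in the fixed set $[0,1]^2$, it suffices to work with a single dyadic grid in which $[0,1]$ is itself a dyadic interval and to invoke the coordinatewise one-parameter comparison via $O(1)$ translated grids, which is routine.
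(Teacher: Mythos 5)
Your overall strategy is the same as the paper's: reduce to rectangles with sides of length at most one by intersecting with the support (the ratio $|I\cap[0,1]|/|I|$ absorbing the logarithm when $|I|>1$, exactly as in the paper's computation using $|I'|s(I')\le 2$), handle the small-side case by transferring to the torus and invoking Lemma \ref{lemma:avgrowth}, deduce (3) and (4) from the decomposition $\chi_R b=P_Rb+\chi_R m_Ib+\chi_R m_Jb-\chi_R m_Rb$, and get (5) from the one-dimensional versions of (2) and (3). The paper does the torus transfer slightly differently (it embeds $R\cup[0,1]^2\subset[-1,2]^2$ and periodizes with period $3$, so that even rectangles sticking out of $[0,1]^2$ with small sides are treated directly, rather than being cut down first), but that is a cosmetic difference.

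There is, however, a genuine gap at estimate (2). Your claim that the identities $\chi_Rb=\chi_{R^\sharp}b$ and $m_Rb=(|R^\sharp|/|R|)m_{R^\sharp}b$ ``reduce each of (1)--(5) to the case $R\subseteq[0,1]^2$'' is not correct for (2): that estimate is not about a single rectangle but about the full $\BMO(\R)$ norm of the one-variable function $m_Ib$, and the testing intervals $J$ in that norm cannot be confined to $[0,1]$ by any support reduction. Since $m_Ib$ is extended by zero outside $[0,1]$, its oscillation over long intervals $J$ with $|J|>1$ (or intervals straddling the endpoints) is not controlled by its behaviour on $[0,1]$ alone --- this is precisely the extension-by-zero phenomenon that makes $\BMO([0,1]^2)$ strictly smaller than $\BMO(\T^2)$, and it is the step to which the paper devotes the three-term decomposition (\ref{eq:Jproj}) (splitting $P_Jm_Ib$ over $J'=J\cap[0,1]$ and $J''=J\setminus[0,1]$ and estimating the two constant pieces through (1)). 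The gap is fixable --- for $|J|>1$ one can bound the mean oscillation over $J$ by $\|\chi_{[0,1]}m_Ib\|_2^2$ and then use the small-interval case together with (1), or follow the paper's decomposition --- but as written your argument silently assumes that the $\BMO(\R)$ norm of the zero-extended average can be tested only inside the support, which is false and must be argued separately.
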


Before proving this lemma, let us first turn to the relation of $\BMO([0,1]^2)$, $\BMO(\R^2)$, $\BMO(\T^2)$,  and $\BMO^d(\R^2)$.

First let us consider the relation between $\BMO(\R^2)$  and $\BMO^d(\R^2)$.
This was clarified only quite recently in  \cite{pipher}, \cite{treil}. Given $\alpha=(\alpha_j)_{j\in \mathbb {Z}}\in \{0,1\}^{\mathbb {Z}}$ and
$r\in [1,2)$, we denote by $\mathcal {D}^{\alpha,r}=r\mathcal {D}^{\alpha}$ the dilated and translated standard dyadic grid $\mathcal {D}$ of $\R$ in the sense of \cite{hyt}. For
$\vec {\alpha}=(\alpha^1,\alpha^2)\in \{0,1\}^{\mathbb {Z}}\times \{0,1\}^{\mathbb {Z}}$ and $\vec {r}=(r_1,r_2)\in [1,2)^2$,
we define $\mathcal {D}^{\vec {\alpha},\vec {r}}$ to be the dilated and translated product dyadic grid in $\R^2$. That is
$Q=Q_1\times Q_2\in \mathcal {D}^{\vec {\alpha},\vec {r}}$ if $Q_1\in r_1\mathcal {D}^{\alpha^1}$ and $Q_2\in r_2\mathcal {D}^{\alpha^2}$.
The work in \cite{pipher}, \cite{treil}  implies in particular that
\begin{multline*}
\BMO(\R^2)=\bigcap_{\vec {\alpha}\in \{0,1\}^{\mathbb {Z}}\times \{0,1\}^{\mathbb {Z}}, \vec {r}\in [1,2)^2 }\BMO^{d,\vec {\alpha}, \vec {r}}(\R^2)   \\
   =  \bigcap_{\vec {\alpha}\in \{0,1\}^{\mathbb {Z}}\times \{0,1\}^{\mathbb {Z}} }\BMO^{d,\vec {\alpha}, \vec {r_0}}(\R^2)  \text{ for any } \vec{r_0} \in [0,1)^2 ,
\end{multline*}
where $\BMO^{d,\vec {\alpha}, \vec {r}}(\R^2)$ is the dyadic $\BMO(\R^2)$ defined with respect to the product dyadic grid $\mathcal {D}^{\vec {\alpha},\vec {r}}$.
One also obtains that
$$
\BMO([0,1]^2)=\bigcap_{\vec {\alpha}\in \{0,1\}^{\mathbb {Z}}\times \{0,1\}^{\mathbb {Z}}, \vec {r}\in [1,2)^2 }\BMO^{d,\vec {\alpha}, \vec {r}}([0,1]^2).
$$

Now  let us consider the relationship between $\BMO([0,1]^2)$ and $\BMO(\T^2)$. It is easy to see that under the usual identification of $[0,1)$ and $\T$,
$\BMO([0,1]^2) \neq \BMO(\T^2)$. A simple example is the function
$$
    b(s,t) =  \log( \min(s, 1-s)) \cdot
\log( \min(t, 1-t)) \quad (s,t) \in [0,1]^2,
$$
 which is in $\BMO(\T^2)$, but
not in $\BMO([0,1]^2)$.

On the other hand, for each $a<0$, $b>1$, we can extend $b \in \BMO([0,1]^2)$ to a doubly
$(b-a)$-periodic function in $\BMO(\R^2)$, by first considering
it as a function on $[a,b)^2$ and then extending this function doubly periodically. Of course the subspace of doubly 1-periodic functions in $\BMO(\R^2)$ can be identified
with $\BMO(\T^2)$, and for any $b>a$, the subspace of doubly $b-a$ periodic functions in $\BMO(\R^2)$ can be identified with $\BMO(\T^2)$ by means of an appropriate dilation.

\proof of Lemma \ref{lemma:locgrowth}. The proof follows mostly from Lemma \ref{lemma:avgrowth}, but we have to attend to a few technicalities.
Let $R= I \times J$ be a rectangle. We only need to consider the case that $I \times J \cap [0,1]^2 \neq \emptyset$, that is $I \cap [0,1] \neq \emptyset$ and $J \cap [0,1] \neq \emptyset$. If $|I|, |J| \le 1$, then $R \cup [0,1]^2 \subset [-1,2]^2$. By first considering the functions in $\BMO([0,1]^2)$ as functions on $[-1,2)$, then extending
doubly periodically with period 3 and identifying  the space of doubly periodic functions with period 3
 in $\BMO(\R^2)$ with $\BMO(\T^2)$, we can apply Lemma \ref{lemma:avgrowth} to
obtain the desired estimates (1) - (5). Note that (5), which didn't appear explicitly in Lemma \ref{lemma:avgrowth}, is a simple consequence of (2) and (3), applied for the one-dimensional case.

 Then (1) is obtained in general by writing $I'=I \cap [0,1]$, $J'=J \cap [0,1]$ and observing that $|I'|s(I'), |J'|s(J')<2$, which yields
\begin{multline*}
|m_{I\times J}b|=\frac{|I'|}{|I|}\frac{|J'|}{|J|}|m_{I'\times J'}b|  \le  \frac{|I'|}{|I|}\frac{|J'|}{|J|} s(I') s(J') \| b \|_{\BMO}  \\
                  \lesssim s(I) s(J) \| b \|_{\BMO}       \,\,\,\textrm{for}\,\,\,|I|,|J|>1
\end{multline*}
and
 \begin{multline*}
        |m_{I\times J}b|=\frac{|I'|}{|I|}|m_{I'\times J}b|   \le  \frac{|I'|}{|I|}  s(I') s(J) \| b \|_{\BMO}  \\
          \lesssim  s(I) s(J) \| b \|_{\BMO}         \,\,\,\textrm{for}\,\,\,|I|>1\,\,\,\textrm{and}\,\,\,|J|\le 1.
 \end{multline*}

To get estimate (2) in case $|I | \le 1 $,
 we need to check boundedness of $\frac{1}{|J|^{1/2}} \| P_J m_I b\|_2$ for arbitrary intervals $J$ with $J \cap [0,1] \neq \emptyset$. For $|J|\le 1$,
we get the desired estimate as above. For $|J|>1$, write $J' = J \cap [0,1]$, $J'' = J \cap [0,1]^c$ and obtain
\begin{equation}   \label{eq:Jproj}
   \begin{split}
     &   P_J m_I b(t)        \\
      = & \, \chi_J(t) (m_I b(t) - m_{I \times J} b)    \\
       = & \, \chi_{J'}(t)\left(m_I b(t)  -  \frac{|J'|}{|J|} m_{I \times J'} b\right) - \chi_{J''}(t)     \frac{|J'|}{|J|} m_{I \times J'} b   \\
        =&\,  \chi_{J'}(t)(m_I b(t)  -   m_{I \times J'} b) +  \chi_{J'}(t) \frac{|J''|}{|J|} m_{I \times J'} b    - \chi_{J''}(t)     \frac{|J'|}{|J|} m_{I \times J'} b. \\
       \end{split}
       \end{equation}
Thus
\begin{multline*}
 \| P_J m_I b(t)  \|_2   \le   \| \chi_{J'} (m_I b(t)  - m_{I \times J'} b) \|_2   \\ +   \| \chi_{J'}(t)   \frac{|J''|}{|J|}   m_{I \times J'} b \|_2 +  \| \chi_{J''}(t)  \frac{|J'|}{|J|} m_{I \times J'} b \|_2   .
\end{multline*}
The first summand is estimated by the previous argument for the case $|J| \le 1$. For the second and third summand, we observe that by (1),
$$|m_{I \times J'} b|\lesssim s(I)s(J')\|b\|_{\BMO}$$
and consequently, as $|J'|\le 1$,
$$\| \chi_{J'}(t)   \frac{|J''|}{|J|}   m_{I \times J'} b \|_2 \lesssim \frac{|J'|^{1/2} |J''|}{|J|}s(I) s(J') \|b\|_{\BMO} \lesssim    s(I)  \|b\|_{\BMO}     $$
and

$$\| \chi_{J''}(t)  \frac{|J'|}{|J|} m_{I \times J'} b \|_2\lesssim \frac{|J''|^{1/2} |J'|}{|J|}s(I) s(J') \|b\|_{\BMO}   \lesssim    s(I)  \|b\|_{\BMO}.$$

Now consider the case $|I|>1$, $I \cap [0,1]  \neq \emptyset$.  Similarly to the above, let $I' = I \cap [0,1]$.
 Writing $m_I b =\frac{|I'|}{|I|}  m_{I'} b $, we obtain the same result.

It will be useful for the further estimates to prove (5) at this point. It is clear for $|I|, |J| \le 1$, otherwise
\begin{multline*}
    \|\chi_I m_J b \|_2^2  =   \frac{|J'|^2}{|J|^2}      \|\chi_{I'} m_{J'} b \|_2^2  \lesssim  \frac{|J'|^2}{|J|^2}  s(J')^2     | I' |  s(I')^2  \|b\|^2_{\BMO} \\
          \lesssim  s(J)^2 s(I)^2 |I|   \|b\|^2_{\BMO} .
\end{multline*}

For (3), write for $R = I \times J$
$$
      \chi_R b = P_R b + \chi_R m_I  b + \chi_R m_J b - \chi_R m_R b
$$
and use (1) and (5).

For (4), write
$$
       \chi_I(s) P_J b(s,t)    = \chi_I(s) \chi_J(t) b(s,t)  - \chi_I(s) m_J b(s)
$$
and use (3) and (5).
\qed

\proof of Theorem \ref{thm:locmultothers}. We only prove the assertion $(1)$. The proof for the other paraproducts
 uses the same ideas combined with those in the proof of Theorem \ref{thm:others}.

 We want to prove that given $\phi\in \LMO^d([0,1]^2)$, $b\in \BMO^d([0,1]^2)$ and  $f\in L^2(\mathbb R^2)$, the function
 $\Pi\left(\Pi(\phi,b),f\right)$ belongs to $L^2(\mathbb R^2)$, with the appropriate norm estimate.

 We now work with the standard system $\D(\R)$ of dyadic intervals in $\R$, the Haar basis
  $(h_I \otimes h_J)_{I, J \in \D(\R)} =   (h_R)_{R \in \D(\R) \times \D(\R)} $ of $L^2(\R^2)$, and the
  decomposition
 $$
      f = \sum_{j_1 = - \infty}^\infty   \sum_{j_2 = - \infty}^\infty  \Da_{\vj} f                                        ,
 $$
 where
 \begin{multline*}
        \Da_{\vj} f = \sum_{|I |= 2^{-j_1}, |J|= 2^{- j_2} }   h_I(s) h_J(t) \langle f, h_I \otimes h_J \rangle    \\
           = \sum_{ R \in \D_{j_1}(\R) \times \D_{j_2}(\R)}   h_R  \langle f, h_R \rangle      \text{ for }  j_1, j_2 \in \ZZ.
 \end{multline*}

Then
$$T:=\Pi\left(\Pi(\phi,b), \cdot \right)=P_{(0,1)^2}T     +P_{(0,1)\times (0,1)^c}T   +  P_{(0,1)^c\times (0,1)} T   +P_{(0,1)^c\times (0,1)^c}T   ,$$
where
\begin{eqnarray*}
 P_{(0,1)\times (0,1)}&=&   \sum_{j_1 = 0}^{\infty}\sum_{j_2=0}^{\infty}      \Da_{\vj},  \\
  P_{(0,1)\times (0,1)^c}&=&   \sum_{j_1 =0}^{\infty}\sum_{j_2=-\infty}^{-1}      \Da_{\vj}, \\
   P_{(0,1)^c\times (0,1)}&=&   \sum_{j_1 = - \infty}^{-1}\sum_{j_2=0}^{\infty}      \Da_{\vj}, \\
 P_{(0,1)^c\times (0,1)^c}&=&   \sum_{j_1 = - \infty}^{-1}\sum_{j_2=-\infty}^{-1}      \Da_{\vj}.
\end{eqnarray*}

Hence we only need to check the $L^2$-boundedness of each of the four terms in the right hand side of the above identity.

The estimate for
$$
      P_{(0,1)^2}\Pi\left(\Pi(\phi,b),f\right)   =      \Pi\left(\Pi(P_{(0,1)^2}       \phi,b),f\right)
$$
  for  $f\in L^2(\mathbb R^2)$ is
 obtained exactly as in the proof of Theorem \ref{thm:main}, with the help of the growth estimate in Lemma \ref{lemma:locgrowth}.

 For the fourth term, we observe that with
$$
P_{(0,1)^c\times (0,1)^c}\Pi\left(\Pi(\phi,b), \cdot \right)=\Pi\left(\Pi(P_{(0,1)^c\times (0,1)^c}\phi,b), \cdot \right),
$$
we only have to check that for given $\phi\in \LMO^d([0,1]^2)$ and $b\in \BMO^d([0,1]^2)$, $P_{(0,1)^c\times (0,1)^c}\Pi(\phi,b)$ belongs to $\BMO^d(\mathbb R^2)$. Using the fact that for $R=I\times J\in \mathcal R$ with $|I|,|J|\ge 1$, $|m_Rb|\lesssim \|b\|_{\BMO}$, one obtains directly that for any open set $\Omega\subset \mathbb R^2$,
$$\|P_\Omega\left(P_{(0,1)^c\times (0,1)^c}\Pi(\phi,b)\right)\|_2^2\le \|P_\Omega\phi\|_2^2\|b\|_{\BMO}^2,$$
which proves that this term is bounded on $L^2(\mathbb R^2)$.

As the second and third terms are symmetric, we only prove the boundedness of the second one. For this, we need to go back to the proof of Theorem \ref{thm:others}. Again, we use  that
$$P_{(0,1)\times (0,1)^c}\Pi\left(\Pi(\phi,b), \cdot \right)=\Pi\left(\Pi( P_{(0,1)\times (0,1)^c}      \phi,b), \cdot \right).$$
\begin{lemma}\label{lemma:technicalend}
Let $\phi,b\in \BMO^d(\T^2)$, $k\in \N_0$. Then
$$\|\Pi\left(\Pi(P_{(0,1)\times (0,1)^c}\phi,b),E_k^{(1)}\right)\|_{L^2\rightarrow L^2}\lesssim (k+1)\|\phi\|_{\BMO^d}\|b\|_{\BMO^d}.$$
\end{lemma}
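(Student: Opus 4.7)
The plan is to follow the blueprint of Lemma \ref{lemma:core2one}, using the local growth estimates of Lemma \ref{lemma:locgrowth} in place of Lemma \ref{lemma:avgrowth}. The structural simplification is that the Haar support of $\phi_1 := P_{(0,1)\times(0,1)^c}\phi$ consists of rectangles $R=I\times J$ with $|I|\le 1$ and $|J|\ge 2$, so $s(J)=1$ and $s(I)\lesssim k+1$ whenever $|I|>2^{-k}$; the problem is effectively one-parameter in the first variable. First I would apply the $\R^2$-analogue of Lemma \ref{lemma:coreone} (same proof) to reduce the claim to
\begin{equation*}
\|\sigma_k^{(1)}\Pi(\phi_1,b)\|_{\BMO^d(\R^2)} \lesssim (k+1)\|\phi\|_{\BMO^d}\|b\|_{\BMO^d},
\end{equation*}
and then split $\phi_1 = E_k^{(1)}\phi_1 + Q_k^{(1)}\phi_1$.

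For the first piece, $\sigma_k^{(1)}\Pi(E_k^{(1)}\phi_1,b)=\Pi(E_k^{(1)}\phi_1,b)$ since $E_k^{(1)}\phi_1$ has Haar support on $|I|>2^{-k}$, where $\sigma_k^{(1)}$ acts as the identity. For an open set $\Omega\subseteq\R^2$, Lemma \ref{lemma:locgrowth}(1) gives $|m_R b|\lesssim s(I)s(J)\|b\|_{\BMO}\lesssim (k+1)\|b\|_{\BMO}$ on the relevant rectangles, and therefore
\begin{equation*}
\|P_\Omega\Pi(E_k^{(1)}\phi_1,b)\|_2^2 = \sum_{\substack{R=I\times J\subseteq\Omega\\ 2^{-k}<|I|\le 1,\;|J|\ge 2}}|\phi_R|^2|m_R b|^2 \lesssim (k+1)^2\|b\|_{\BMO}^2\|\phi\|_{\BMO^d}^2|\Omega|,
\end{equation*}
which yields the desired $\BMO^d$-bound for the first piece.

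For the second piece $\sigma_k^{(1)}\Pi(Q_k^{(1)}\phi_1,b)$, the nontrivial Haar coefficients lie only on rectangles $R=I\times J$ with $|I|=2^{-k}$. For such a function the $\BMO^d(\R^2)$-norm is controlled by $\sup_R|R|^{-1}\|P_R f\|_2^2$ over such rectangles: defining $\Omega^I=\bigcap_{s\in I}\{t:(s,t)\in\Omega\}$ and $g_I(t)=\sum_J f_{IJ}h_J(t)$ for $|I|=2^{-k}$, one has $\|P_\Omega f\|_2^2 = \sum_I\|P_{\Omega^I}g_I\|_2^2$, then $\|P_{\Omega^I}g_I\|_2^2\le|\Omega^I|\|g_I\|_{\BMO^d(\R)}^2$ with the one-dimensional $\BMO^d(\R)$ norm tested on dyadic intervals, and finally $\sum_I|I||\Omega^I|\le|\Omega|$ (the rectangles $I\times\Omega^I$ are pairwise disjoint and contained in $\Omega$). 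On a rectangle $R=I\times J$ with $|I|=2^{-k}$, unfolding $\sigma_k^{(1)}$ yields
\begin{equation*}
\|P_R\sigma_k^{(1)}\Pi(Q_k^{(1)}\phi_1,b)\|_2^2 = \sum_{R''\subseteq R}|(\phi_1)_{R''}|^2|m_{R''}b|^2 = \|\Pi_{P_R\phi_1}(\chi_R b)\|_2^2,
\end{equation*}
the last equality because $\Pi_{P_R\phi_1}$ only sees averages of $b$ over subrectangles of $R$. Combining $\|\Pi_g\|_{L^2\to L^2}\lesssim\|g\|_{\BMO^d}$, $\|P_R\phi_1\|_{\BMO^d}\le\|\phi\|_{\BMO^d}$, and Lemma \ref{lemma:locgrowth}(3) which gives $\|\chi_R b\|_2^2\lesssim s(R)^2|R|\|b\|_{\BMO}^2\lesssim (k+1)^2|R|\|b\|_{\BMO}^2$, finishes the estimate.

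The main obstacle is the fiberwise reduction of $\BMO^d(\R^2)$ to testing on rectangles in the second piece; such a reduction fails for general product-BMO functions, but is possible here precisely because the Haar support is concentrated at the single scale $|I|=2^{-k}$, which allows the two-parameter norm to be computed via a one-parameter $\BMO^d(\R)$ estimate along each $I$-slice. Everything else amounts to organising the computation to keep the $s(J)=1$ bound on the large-$J$ rectangles, so that the only logarithmic factor encountered is $s(I)\lesssim k+1$.
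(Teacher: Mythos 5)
Your proposal is correct and follows essentially the same route as the paper: reduce via the one-parameter analogue of Lemma \ref{lemma:coreone} to a $\BMO^d$ bound on $\sigma_k^{(1)}\Pi(P_{(0,1)\times(0,1)^c}\phi,b)$, split off $E_k^{(1)}$ and $Q_k^{(1)}$ pieces, estimate the first on open sets and the second on single-scale rectangles via $\|\Pi_g\|_{L^2\to L^2}\lesssim\|g\|_{\BMO^d}$ together with Lemma \ref{lemma:locgrowth} (where $s(J)=1$ since $|J|\ge 2$). Your fiberwise justification of why it suffices to test the $\BMO^d$ norm on rectangles when the Haar support sits at the single scale $|I|=2^{-k}$ is a point the paper merely asserts, so that added detail is welcome but does not change the argument.
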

\proof
We follow the proof of Lemma \ref{lemma:core2one}. Again we write $E$ for $E^{(1)}$, and $\sigma$ for $\sigma^{(1)}$. As in
 Lemma \ref{lemma:coreone}, we need to estimate
\begin{eqnarray*}
&&\|\sigma_k\left(\Pi(P_{(0,1)\times (0,1)^c}\phi,b)\right)\|_{\BMO^d}\\
     &\le & \|\sigma_k\left(\Pi(P_{(0,1)\times (0,1)^c}E_k\phi,b)\right)\|_{\BMO^d} + \|\sigma_k\left(\Pi(P_{(0,1)\times (0,1)^c}Q_k\phi,b)\right)\|_{\BMO^d} \\
    & = &    \| \left(\Pi(P_{(0,1)\times (0,1)^c}E_k\phi,b)\right)\|_{\BMO^d} + \|\sigma_k\left(\Pi(P_{(0,1)\times (0,1)^c}Q_k\phi,b)\right)\|_{\BMO^d} .\\
\end{eqnarray*}
Starting we the first term, we obtain for any open set $\Omega\subset \mathbb R^2$,
\Beas
\frac{1}{|\Omega|}\|P_\Omega\left(\Pi(P_{(0,1)\times (0,1)^c}E_k\phi,b)\right)\|_2^2 &=& \frac{1}{|\Omega|}\sum_{R=I\times J,|I|>2^{-k},|J|>1, R\subset \Omega}|\phi_R|^2|m_Rb|^2\\ &\lesssim& \frac{(k+1)^2}{|\Omega|}\sum_{R=I\times J,|I|>2^{-k},|J|>1, R\subset \Omega}|\phi_R|^2\\ &\lesssim& (k+1)^2\|\phi\|_{\BMO^d}^2\|b\|_{\BMO^d}^2.
\Eeas
where we use Lemma \ref{lemma:locgrowth}.

For the second term, we observe that $\sigma_k\left(\Pi(P_{(0,1)\times (0,1)^c}Q_k\phi,b)\right)$ has only nontrivial coefficients for those rectangles $R=I\times J$ with $|I|=2^{-k}$ and $|J|>1$. Hence, it is enough to check the $\BMO$-norm on rectangles $R=I\times J$ with $|I|=2^{-k}$ and $|J|>1$. We obtain
\Beas
\frac{1}{|R|}\|P_R\sigma_k \left(\Pi(P_{(0,1)\times (0,1)^c}Q_k\phi,b)\right)\|_2^2 &=& \frac{1}{|R|}\sum_{J'\subseteq J}|\left(\sigma_k\left(\Pi(P_{(0,1)\times (0,1)^c}Q_k\phi,b)\right)\right)_{I,J'}|^2\\
&=& \frac{1}{|R|}\sum_{I'\subseteq I}\sum_{J'\subseteq J}     |\phi_{I'J'}|^2    |m_{I'J'}b|^2 \\
&=& \frac{1}{|R|}\|\Pi(P_R\phi,\chi_Rb)\|_2^2\\
&\lesssim& \frac{1}{|R|}\|P_R\phi\|_{\BMO^d}\|\chi_Rb\|_2^2\\ &\lesssim& (k+1)^2\|\phi\|_{\BMO^d}^2\|b\|_{\BMO^d}^2.
\Eeas
\qed

The remainder of the proof of boundedness of $P_{(0,1)\times (0,1)^c}\Pi\left(\Pi(\phi,b), \cdot \right)$ follows now with Cotlar's Lemma exactly as in the proof of  Theorem \ref{thm:others}.

\qed

Before giving our main result of this section, we introduce the space $\LMO([0,1]^2)$.
\begin{definition}\label{LMOrestrict}
Let $f\in L^2(\R^2)$. We say that $f\in \LMO([0,1]^2)$ if $\supp f\subseteq [0,1]^2$,
 and there exists a constant $C>0$ such that for any $\vec {\alpha}\in \R\times \R$, $\vec {r}\in [1,2)^2$, and
$\vj = (j_1, j_2) \in \N_0\times \N_0$,
$$
\|Q^{\vec {\alpha}, \vec {r}}_{\vj}f\|_{\BMO^{d,\vec {\alpha}, \vec {r}}([0,1]^2)}\le C\frac{1}{(j_1+1)(j_2+1)}.
$$
\end{definition}
Here, $Q^{\vec {\alpha}, \vec {r}}_{\vj}$ denotes the projection as in (\ref{eq:qdef}), but relative to the dyadic grid $\mathcal{D}^{\vec {\alpha},\vec {r}}$. More precisely, $$Q^{\vec {\alpha}, \vec {r}}_{\vj}f(s,t)=\sum_{r_1|I|\le 2^{-j_1}, r_2|J|\le 2^{-j_2}}\langle f,h_I^{\alpha_1,r_1}h_J^{\alpha_2,r2}\rangle h_I^{\alpha_1,r_1}(s)h_J^{\alpha_2,r2}(t),$$ where $h_I^{\alpha_l,r_l}$ is the Haar wavelet adapted to $I\in r_l\mathcal {D}^{\alpha_l}$, $l=1,2$.

Clearly, $\LMO([0,1]^2)$ continuously embeds into $\BMO([0,1]^2)$.
Moreover, if we denote by $\LMO^{d,\vec {\alpha}, \vec {r}}([0,1]^2)$ the subset of
$\BMO^{d,\vec {\alpha}, \vec {r}}([0,1]^2)$ of functions $f$ such that there exists $C>0$ with
$$
    \|Q_{\vj}^{\vec {\alpha}, \vec {r}}f\|_{\BMO^{d,\vec {\alpha}, \vec {r}}([0,1]^2)}\le C\frac{1}{(j_1+1)(j_2+1)}   \text{ for any }   \vj\in \N_0\times \N_0,
$$
then of course
 $$
 \LMO([0,1]^2)=\bigcap_{\vec {\alpha}\in \{0,1\}^{\mathbb {Z}}\times \{0,1\}^{\mathbb {Z}}, \vec {r}\in [1,2)^2 }\LMO^{d,\vec {\alpha}, \vec {r}}([0,1]^2).
 $$
$\LMO_1([0,1]^2)$ and $\LMO_2([0,1]^2)$ along with their dyadic counterparts are defined analogously.

Here is the main result of this section.

\begin{theorem} \label{thm:hankel} Let $\phi
  \in \LMO([0,1]^2)$. Then
 $$
   [H_1, [H_2, \phi]]: \BMO([0,1]^2) \rightarrow \BMO(\R^2),
$$
is bounded, and $\|[H_1, [H_2, \phi]]\|_{\BMO([0,1]^2) \to \BMO(\R^2)} \lesssim
\|\phi\|_{\LMO([0,1]^2)}$.
\end{theorem}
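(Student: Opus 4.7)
The plan is to reduce the iterated commutator to a finite combination of paraproducts, via a dyadic-shift decomposition of the Hilbert transforms, and then invoke Theorem \ref{thm:locmultothers} grid by grid.

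First I would use the Petermichl-Hyt\"onen representation: each $H_j$, $j=1,2$, can be written as an average, over shifts $\alpha_j \in \{0,1\}^\ZZ$ and dilations $r_j \in [1,2)$, of a dyadic shift $\sH_j^{\alpha_j,r_j}$ adapted to the translated/dilated grid $\D^{\alpha_j,r_j}$. Fubini then gives
$$
    [H_1,[H_2,\phi]] \;=\; \mathbb{E}_{\vec\alpha,\vec r}\;[\sH_1^{\alpha_1,r_1},[\sH_2^{\alpha_2,r_2},\phi]],
$$
so it is enough to produce a bound, uniform in $(\vec\alpha,\vec r)$, for the dyadic iterated commutator associated to the product grid $\D^{\vec\alpha,\vec r}$.

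Next, for a fixed product grid, I would expand $M_\phi$ and $\sH_j^{\alpha_j,r_j}$ in the product Haar basis of $\D^{\vec\alpha,\vec r}$ and commute. This is the standard computation used in the Ferguson-Lacey and Lacey-Terwilleger decompositions: each one-parameter commutator $[\sH_j^{\alpha_j,r_j},M_\phi]$ produces a finite linear combination of one-parameter paraproducts in $\phi$ (of types $\Pi^{0}_\phi$, $\Pi^{1}_\phi$, $\Delta_\phi$) composed with shift-like operators in the remaining variable. Iterating once in each variable, one obtains
$$
    [\sH_1^{\alpha_1,r_1},[\sH_2^{\alpha_2,r_2},\phi]] \;=\; \sum_{\vec\beta\in\{0,1\}^2}\;\sum_{\veps,\veps'\in\{0,1\}^2} c_{\vec\beta,\veps,\veps'}\;\sA_{\veps}\,\Pi^{\vec\beta}_\phi\,\sA_{\veps'},
$$
where each $\sA_{\veps}$ is a composition of identities and dyadic shifts $\sH_j^{\alpha_j,r_j}$ relative to the grid $\D^{\vec\alpha,\vec r}$. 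The $\sA_\veps$ are bounded on $\BMO^{d,\vec\alpha,\vec r}(\R^2)$ with norm independent of $(\vec\alpha,\vec r)$, since dyadic shifts preserve dyadic product $\BMO$ on the same grid.

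Then I apply Theorem \ref{thm:locmultothers} to each $\Pi^{\vec\beta}_\phi$, for the grid $\D^{\vec\alpha,\vec r}$: for $\vec\beta=(0,0)$ the bound is $\|\phi\|_{\LMO^{d,\vec\alpha,\vec r}([0,1]^2)}$, for $\vec\beta=(1,1)$ it is $\|\phi\|_{\BMO^{d,\vec\alpha,\vec r}([0,1]^2)}$, and for the mixed $\vec\beta$ it is $\|\phi\|_{\LMO_{\vec\beta}^{d,\vec\alpha,\vec r}([0,1]^2)}$; all three are dominated by $\|\phi\|_{\LMO([0,1]^2)}$ by Definition \ref{LMOrestrict} and the inclusion $\LMO\hookrightarrow\BMO$. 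Combined with the $\sA_\veps$-estimates, each iterated dyadic commutator is bounded $\BMO^{d,\vec\alpha,\vec r}([0,1]^2)\to \BMO^{d,\vec\alpha,\vec r}(\R^2)$ with constant $\lesssim \|\phi\|_{\LMO([0,1]^2)}$, uniformly in $(\vec\alpha,\vec r)$.

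To finish, I use the intersection characterizations
$$
    \BMO([0,1]^2) = \bigcap_{\vec\alpha,\vec r}\BMO^{d,\vec\alpha,\vec r}([0,1]^2),\qquad \BMO(\R^2)=\bigcap_{\vec\alpha,\vec r}\BMO^{d,\vec\alpha,\vec r}(\R^2),
$$
recalled from Pipher-Ward and Treil, together with the Fubini/averaging identity above, to conclude that $[H_1,[H_2,\phi]]b$ lies in every $\BMO^{d,\vec\alpha,\vec r}(\R^2)$ with the stated norm, and therefore in $\BMO(\R^2)$. The main obstacle is Step 2: one must verify carefully that the double commutator expansion really produces only the paraproducts covered by Theorem \ref{thm:locmultothers} (in particular, the ``diagonal'' contributions collapse to $\Delta_\phi$-type terms and do not require the full $\LMO$ condition) and that the accompanying shift factors $\sA_\veps$ are bounded on dyadic product $\BMO$ with constants independent of the grid, so that the averaging over $(\vec\alpha,\vec r)$ is legitimate.
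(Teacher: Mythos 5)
Your overall route is the paper's: represent each $H_j$ as a Petermichl--Hyt\"onen average of dyadic shifts, prove a bound for the iterated dyadic commutator uniformly in the grid $(\vec\alpha,\vec r)$, and then return to $\BMO(\R^2)$ via Pipher--Ward/Treil. But the step you flag as ``the main obstacle'' is exactly where the paper's proof lives (its Theorem \ref{thm:dyshift}), and your sketch of it is not correct, so as it stands there is a genuine gap. The Haar expansion of $M_\phi$ is a $3\times 3$ decomposition into \emph{nine} pieces, not a combination of the four paraproducts $\Pi^{\vec\beta}_\phi$ conjugated by shifts: besides $\Pi_\phi,\Delta_\phi,\Pi^{(0,1)}_\phi,\Pi^{(1,0)}_\phi$ (which are handled by the local Theorem \ref{thm:locmultothers} together with boundedness of $S^{(1)},S^{(2)}$ on $\BMO^d$), there are five Haar-diagonal pieces ${R_R}_\phi,{\Pi_R}_\phi,{\Delta_R}_\phi,{R_\Pi}_\phi,{R_\Delta}_\phi$. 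These do \emph{not} collapse to $\Delta_\phi$-type terms: an explicit computation shows $[S^{(2)},{\Pi_R}_\phi]b=\tfrac{1}{2\sqrt2}\,\Pi_{\Delta_{\tilde\phi}}(\tilde b)$, a mixed paraproduct with a \emph{modified} symbol $\tilde\phi$ whose Haar coefficients are redistributed to grandchildren intervals, so one needs the $\LMO_1^d$ (resp.\ $\LMO_2^d$) condition and the verification $\|\tilde\phi\|_{\LMO_1^d}\lesssim\|\phi\|_{\LMO_1^d}$, $\|\tilde b\|_{\BMO^d}\lesssim\|b\|_{\BMO^d}$; and the doubly diagonal piece $[S^{(1)},[S^{(2)},{R_R}_\phi]]$ is not a paraproduct at all --- the paper bounds it by showing it sends $h_{IJ}$ to an orthogonal combination of Haar functions of the children rectangles and invoking the rectangular norm $\|\phi\|_{\BMO^d_{rect}}$. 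None of this is supplied or correctly anticipated by your schematic identity $\sum c\,\sA_\veps\Pi^{\vec\beta}_\phi\sA_{\veps'}$.

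A secondary inaccuracy is the concluding step: from the grid-by-grid estimate you only get that each integrand $[S^{\alpha^1,r_1},[S^{\alpha^2,r_2},\phi]]b$ lies in the dyadic space $\BMO^{d,\vec\alpha,\vec r}(\R^2)$ \emph{for its own grid}, so you cannot deduce that the averaged function lies in every $\BMO^{d,\vec\alpha,\vec r}(\R^2)$ and then use the intersection characterization. What is needed (and what the paper cites, Treil Remark 0.5 and Pipher--Ward) is the averaging statement: an average over translated and dilated grids of functions with uniformly bounded dyadic product $\BMO$ norms belongs to $\BMO(\R^2)$ with comparable norm. With the nine-piece decomposition, the rectangular-$\BMO$ and modified-symbol estimates, and this averaging result in place, your argument would match the paper's.
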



\begin{proof} of Theorem \ref{thm:hankel}. We use the representation of the Hilbert transform as
  averages
of dyadic shifts from \cite{pet}, \cite{hyt}. Let $S: L^2( \R) \rightarrow
L^2(\R)$ be the bounded linear operator defined by $S h_I = h_{I^+} - h_{I^-}$, $I \in \D$.
Define $S^{(1)} = S \otimes \eins$, $S^{(2)} = \eins \otimes S$,
as operators on $L^2(\R^2) = L^2(\R) \otimes L^2(\R)$. For the
averaging technique, we need to investigate the iterated
commutator
$$
   [S^{(1)}, [S^{(2)},\phi]].
$$
We first prove the following dyadic analogue of the commutator theorem.
\begin{theorem} \label{thm:dyshift} Let $\phi \in \LMO^d([0,1]^2)$. Then
$$
   [S^{(1)}, [S^{(2)},\phi]]: \BMO^d([0,1]^2) \rightarrow \BMO^d(\R^2)
$$
is bounded, and $\|[S^{(1)}, [S^{(2)},\phi]]\|_{\BMO^d([0,1]^2) \rightarrow
  \BMO^d(\R^2)} \lesssim \|\phi\|_{\LMO^d([0,1]^2)}$.
\end{theorem}
\begin{proof}
We formally decompose the multiplication operator with $\phi$ into
$9$ parts: ${\Pi}_\phi$, ${\Delta}_\phi$,
${\Pi^{(0,1)}}_\phi$, ${\Pi^{(1,0)}}_\phi$, ${R_\Delta}_\phi$,
${R_\Pi}_\phi$, ${\Delta_R}_\phi$, ${\Pi_R}_\phi$, ${R_R}_\phi$,
corresponding to the matrix elements $\langle M_\phi h_I(s)
h_J(t), h_{I'}(s) h_{J'}(t) \rangle$ for $I' \subset I$, $I' = I$,
$I' \subset I$, $I' \supset I$, $J' \subset J$, $J' = J$, $J'
\supset J$. Notice that the operator $R$ denotes the Haar-diagonal
part of the multiplication operator.

It is easy to see that $S^{(1)}$ and $S^{(2)}$ are bounded on
$\BMO^d(\R^2)$. Thus, after considering Theorems \ref{thm:main},
\ref{thm:others} and symmetry of variables, we are left to
consider $ [S^{(1)},
  [S^{(2)},{R_R}_\phi]]$,  $ [S^{(1)},
  [S^{(2)},{\Pi_R}_\phi]]$,  $ [S^{(1)},
  [S^{(2)},{\Delta_R}_\phi]]$.

We recall that
$$R_{R_\phi}b(s,t)=\sum_{I,J}b_{IJ}m_{IJ}(\phi)h_I(s)h_J(t),$$
$$\Pi_{R_\phi}b(s,t)=\sum_{I,J}m_J(\phi_I)m_I(b_J)h_I(s)h_J(t)$$ and
       $$\Delta_{R_\phi}b(s,t)=\sum_{I,J}m_J(\phi_I) b_{I,J} h_I(s)h_J^2(t).$$

We start with $ [S^{(1)},
  [S^{(2)},{R_R}_\phi]]$. One verifies that
\begin{multline*}
[S^{(1)},
  [S^{(2)},{R_R}_\phi]] h_{IJ} = (m_{I,J} \phi - m_{I^+,J} \phi -
m_{I,J^+} \phi + m_{I^+, J^+} \phi ) h_{I^+, J^+}\\
  + (m_{I,J} \phi - m_{I^+,J} \phi -
m_{I,J^-} \phi + m_{I^+, J^-} \phi ) h_{I^+, J^-} \\ + (m_{I,J}
\phi - m_{I^-,J} \phi -
m_{I,J^-} \phi + m_{I^-, J^+} \phi ) h_{I^-, J^+} \\
+ (m_{I,J} \phi - m_{I^-,J} \phi - m_{I,J^-} \phi + m_{I^-, J^-}
\phi ) h_{I^-, J^-}.
\end{multline*}
Thus $[S^{(1)},
  [S^{(2)},{R_R}_\phi]]$ preserves the orthogonality of the Haar
  system $(h_{I,J})_{I,J, \in \D}$.  Letting $\tilde \phi = E_{k+1,
  l+1} \phi$ for $|I|= 2^{-k}$, $|J|= 2^{-l}$, we find that
\begin{multline*}
\|[S^{(1)},[S^{(2)},{R_R}_\phi]] h_{IJ}\|_2^2  = \frac{1}{|I||J|}
\|\tilde \phi(s,t) - m_I \tilde
  \phi(t) - m_J \tilde \phi (s) + m_{I \times J} \tilde \phi \|_2^2 \\ =
  \frac{1}{|I||J|} \|P_{I\times J} \tilde \phi \|^2_2  \le \frac{1}{|I||J|} \|P_{I\times J} \phi \|^2_2
   \le \|\phi\|_{\BMO_{rect}^d}^2.
\end{multline*}
Hence for $b = \sum_{I,J \in \D} h_{I,J} b_{I,J} \in \BMO^d(\R^2)$,
$\Omega \subseteq \R^2$ open,
\begin{eqnarray*}
 \|P_\Omega [S^{(1)}, [S^{(2)},{R_R}_\phi]] b \|_2^2
    &\le& \|[S^{(1)}, [S^{(2)},{R_R}_\phi]] P_{\tilde \Omega} b \|_2^2 \\
& \le& \sum_{I \times J \in \tilde \Omega}
      \|\phi\|^2_{\BMO_{rect}^d(\R^2)} |b_{I,J}|^2\\ &\le&  \|\phi\|_{\BMO^d(\R^2)_{rect} }^2 \|b\|_{\BMO^d(\R^2)}^2 |\tilde \Omega| \\
   &\le& 4  \|\phi\|_{\BMO^d([0,1]^2)}^2 \|b\|_{\BMO^d(\R^2)}^2 |\Omega|,
\end{eqnarray*}
where $\tilde \Omega= \cup_{I, J \in \D, I \times J \subseteq
\Omega} \tilde I \times \tilde J$ and $\tilde I$, $\tilde J$ are
the parents of $I$, $J$. Thus
\begin{equation}
    \|[S^{(1)}, [S^{(2)},{R_R}_\phi]]\|_{\BMO^d([0,1]^2) \rightarrow \BMO^d(\R^2)} \le 2 \|\phi\|_{\BMO_{rect}^d([0,1]^2)}.
\end{equation}

Here $\BMO_{rect}^d(\R^2)$ is the dyadic rectangular $\BMO$ space which continuously contains $\BMO^d(\R^2)$ and consists of function $f\in L^2_0(\R^2)$ such that
$$\sup_{I\times J\in \D(\R)^2}     \|  P_{I \times J} f\| =   \sup_{I\times J\in \D(\R)^2}\frac{1}{|I||J|}
\|f(s,t) - m_I f(t) - m_J f (s) + m_{I \times J}f \|_2^2<\infty.$$

For the boundedness of $ [S^{(1)},
  [S^{(2)},{\Pi_R}_\phi]]$ and  $ [S^{(1)},
  [S^{(2)},{\Delta_R}_\phi]]$ from $\BMO^d([0,1]^2)$ to $\BMO^d(\R^2)$, we remark that since
  $S^{(1)}$ is bounded on $\BMO^d(\R^2)$, we only need to show that
  $[S^{(2)},{\Delta_R}_\phi]$ and $[S^{(2)},{\Pi_R}_\phi]$ are
  bounded from $\BMO^d([0,1]^2)$ to $\BMO^d(\R^2)$.

  Straightforward computations give us for $b\in \BMO^d([0,1]^2)$,
\begin{eqnarray*}[S^{(2)},{\Delta_R}_\phi](b)(s,t)
&=&
  \sum_{I,J}b_{IJ}\phi_{IJ}h_I^2(s) \frac{h_{J^-}(t)-h_{J^+}(t)}{|J|^{1/2}}\\
&=&
 \frac{1}{2\sqrt 2}\sum_{I,J}  b_{IJ}\phi_{IJ}h_I^2(s) \left( \frac{\chi_{J^{-+}}(t)}{|J^{-+}|} -
 \frac{\chi_{J^{--}}(t)}{|J^{--}|} - \frac{\chi_{J^{++}}(t)}{|J^{++}|} +  \frac{\chi_{J^{+-}}(t)}{|J^{+-}|} \right) \\
&=& \frac{1}{2\sqrt 2}\Delta_{\tilde {\phi}}(\tilde {b})(s,t),
\end{eqnarray*}
  where
$\tilde b(s,t)=\sum_{I,J}b_{IJ}h_I(s)(h_{J^{-+}}(t) -h_{J^{--}}(t)
- h_{J^{++}}(t) +h_{J^{+-}}(t))$ and $\tilde
\phi=\sum_{I,J}\phi_{I,J}h_I(s)(h_{J^{-+}}(t) -h_{J^{--}}(t) -
h_{J^{++}}(t) +h_{J^{+-}}(t))$.

We obtain in the same way
 \begin{eqnarray*}[S^{(2)},{\Pi_R}_\phi](b)(s,t) &=&
  \sum_{I,J}\phi_{IJ} m_I(b_J) h_I(s)\frac{h_{J^-}(t)-h_{J^+}(t)}{|J|^{1/2}}\\
&=& \frac{1}{2\sqrt 2} \sum_{I,J}\phi_{IJ} m_I(b_J) h_I(s) \left(
\frac{\chi_{J^{-+}}(t)}{|J^{-+}|} -
 \frac{\chi_{J^{--}}(t)}{|J^{--}|} - \frac{\chi_{J^{++}}(t)}{|J^{++}|} +  \frac{\chi_{J^{+-}}(t)}{|J^{+-}|} \right) \\
 &=&  \frac{1}{2\sqrt 2}\Pi_{\Delta_{\tilde {\phi}}}(\tilde {b})(s).
\end{eqnarray*}
Since $\|\tilde b\|_{\BMO^d([0,1]^2)} \lesssim \|b\|_{\BMO^d([0,1]^2)}$ and
$\|\tilde \phi\|_{\LMO_{1}^d([0,1]^2)} \lesssim \|\phi\|_{\LMO_{1}^d([0,1]^2)}$,
we obtain
\begin{equation}
  \|[S^{(1)}, [S^{(2)},{\Delta_R}_\phi]]\|_{\BMO^d([0,1]^2) \to \BMO^d(\R^2)} \lesssim \|\phi\|_{\BMO^d([0,1]^2)}
\end{equation}
and
\begin{equation}
  \|[S^{(1)}, [S^{(2)},{\Pi_R}_\phi]]\|_{\BMO^d([0,1]^2) \to \BMO^d(\R^2)} \lesssim \|\phi\|_{\LMO_{1}^d([0,1]^2)}
\end{equation}
by  Theorem \ref{thm:others}. Swapping variables yields
\begin{equation}
  \|[S^{(1)}, [S^{(2)},{R_\Delta}_\phi]]\|_{\BMO^d([0,1]^2) \to \BMO^d(\R^2)} \lesssim \|\phi\|_{\BMO^d([0,1]^2)}
\end{equation}
and
\begin{equation}
  \|[S^{(1)}, [S^{(2)},R_{\Pi_\phi}]]\|_{\BMO^d([0,1]^2) \to \BMO^d(\R^2)} \lesssim \|\phi\|_{\LMO_{2}^d([0,1]^2)}.
\end{equation}
This finishes the proof of Theorem \ref{thm:dyshift}.
\end{proof}

To finish the proof of Theorem \ref{thm:hankel},  we need to
consider the relation between $\BMO(\R^N)$ and
$\BMO^d(\R^N)$  established in \cite{pipher,treil}.

Let us momentarily return to the one-variable setting and recall the following result which simplifies the one in \cite{pet}.

\begin{theorem}(Theorem 1.1 of \cite{hyt})  For $r\in [1, 2)$ and $\beta\in  \{0, 1\}^{\mathbb {Z}}$, let $S^{\beta,r}$ be the dyadic shift associated to the dyadic system $r\mathcal {D}^{\beta}$. Let $\mu$ stand for the canonical probability measure on $\{0, 1\}^{\mathbb {Z}}$ which makes the coordinate
functions $\beta_j$ independent with $\mu(\beta_j = 0) = \mu(\beta_j = 1) = 1/2$. Then for all $p\in  (1,\infty)$ and $f\in L^p(\R)$,
\begin{equation}
Hf (x)=-\frac{8}{\pi}\int_1^2\int_{\{0, 1\}^{\mathbb {Z}}}S^{\beta,r}f(x)d\mu(\beta)\frac{dr}{r},
\end{equation}

where the integral converges both pointwise for a.e. $x\in \R$ and also in the sense of an $L^p(\R)$-valued Bochner integral.
\end{theorem}

Returning to the two-variable setting, we obtain for $b \in \BMO([0,1]^2)$, $\phi \in
\LMO([0,1]^2)$:

$$
[H_1, [H_2, \phi]] b=\frac{64}{\pi^2}\int_1^2\int_1^2\int_{\{0, 1\}^{\mathbb {Z}}}\int_{\{0, 1\}^{\mathbb {Z}}}[S^{\alpha^1, r_1},[S^{\alpha^2, r_2}, \phi]] \, b \,d\mu(\alpha^1)\frac{dr_1}{r_1}\,d\mu(\alpha^2)\frac{dr_2}{r_2}.
$$

Now since $b
\in \BMO([0,1]^2)$, $\phi \in \LMO([0,1]^2)$, we have that $b \in
\BMO^{d,\vec {\alpha}, \vec {r}}(\R^2)$, and $\phi \in
\LMO^{d,\vec {\alpha}, \vec {r}}(\R^2)$ for each $\vec {\alpha}=(\alpha^1, \alpha^2)\in \{0, 1\}^{\mathbb {Z}}\times \{0, 1\}^{\mathbb {Z}}$ and $\vec {r}=(r_1, r_2)\in [1,2)^2$ with uniformly bounded norm (see
e.~.g.~\cite{ferglac}). Thus there exists a constant $C>0$ such
that
$$
     \|[S^{\alpha^1, r_1},[S^{\alpha^2, r_2}, \phi]] b\|_{\BMO^{d,\vec {\alpha}, \vec {r}}}
     \le C \|b\|_{\BMO} \|\phi\|_{\LMO} \text{ for all } (\alpha^1, \alpha^2, r_1, r_2)
$$
by Theorem \ref{thm:dyshift}. By \cite{treil}, Remark 0.5 (see also \cite{pipher}), it follows that
$$\frac{64}{\pi^2}\int_1^2\int_1^2\int_{\{0, 1\}^{\mathbb {Z}}}\int_{\{0, 1\}^{\mathbb {Z}}}[S^{\alpha^1, r_1},[S^{\alpha^2, r_2}, \phi]] \, b \,d\mu(\alpha^1)\frac{dr_1}{r_1}\,d\mu(\alpha^2)\frac{dr_2}{r_2}
  \in \BMO(\R^2)
$$
with norm controlled by $\|b\|_{\BMO} \|\phi\|_{\LMO}$. The proof is complete.

\end{proof}

\section{Acknowledgements} It is a pleasure to thank Aline Bonami for many very useful discussions and comments.
 Prof Bonami's visit at the University of Glasgow was made possible
through a grant of the Edinburgh Mathematical Society. The authors also gratefully acknowledge support of
the Fields Institute of Mathematical Sciences and the ``Thematic Program on New Trends in Harmonic Analysis'',
where part of this work was completed.

\bibliographystyle{plain}

\begin{thebibliography}{1}
\bibitem{bp}
\textsc{O.~Blasco, S.~Pott}, \emph{Dyadic BMO on the bidisc}, Rev. Math.
Iberoamericana 21 (2005), no. 2, 483-510.

\bibitem{Ch}
\textsc{S-Y. A.~Chang}, \emph{Carleson measure on the bi-disc}, Ann. of
Math. (2) 109 (1979), no. 3, 613-620.

\bibitem{ChFef1}
\textsc{S-Y. A.~Chang and R.~Fefferman}, \emph{A continuous version of
$H^1$ duality with BMO on the bidisc}, Ann. of Math. (2) 112
(1980), no. 1, 179-201.

\bibitem{ChFef2}
\textsc{S-Y. A.~Chang and R.~Fefferman}, \emph{Some recent developments
in Fourier analysis and $H^p$ theory on product domains}, Bull.
Amer. Math. Soc. 12 (1985), 1-43.

\bibitem{fergsad}
\textsc{S.~Ferguson, C.~Sadosky}, \emph{Characterizations of bounded
mean oscillation on the polydisk in terms of Hankel operators and
Carleson measures}, J. Anal. Math. 81 (2000), 239-267.

\bibitem{ferglac}
\textsc{S.~H.~Ferguson, M.~T.~Lacey}, \emph{A characterization of product BMO by commutators},
  Acta Math.~189(2002),  no.~2, 143--160.


  \bibitem{hyt}
  \textsc{T.~Hyt\"onen}, \emph{On Petermichl's dyadic shift and the Hilbert transform}, C. R. Math. Acad. Sci. Paris 346 (2008), no. 21-22, 1133--1136

 \bibitem{JPS}
  \textsc{S.~Janson, J.~Peetre, S.~Semmes}, \emph{On the action of Hankel and Toeplitz operators on some function spaces}, Duke Math.~J.~51(1984), no. 4, 937--958

\bibitem{laceyjason}
\textsc{M.~T.~Lacey, J.~Metcalfe}, \emph{Paraproducts in one and several variables},
Forum Math. 19 (2007), no. 2, 325 -- 351

\bibitem{laceyterw}
\textsc{M.~T.~Lacey, E.~Terwilleger}, \emph{Hankel operators in several complex variables and product BMO},  Houston J. Math. 35 (2009), no. 1, 159 --183

\bibitem{muscalu}
\textsc{C.~Muscalu, J.~Pipher, T.~Tao, C.~Thiele}
\emph{Bi-parameter paraproducts},  Acta Math.  193  (2004),  no. 2, 269--296.

\bibitem{pet}
\textsc{S.~Petermichl}, \emph{Dyadic shifts and a logarithmic estimate
for Hankel operators with matrix symbol}, C. R. Acad. Sci. Paris
S\'er. I Math. 330 (6) (2000) 455-460.

\bibitem{pipher} J.~Pipher, L.~A.~Ward, \emph{BMO from dyadic BMO on the bidisc.} J. Lond. Math. Soc. (2) 77 (2008), no. 2, 524–544..

\bibitem{pottsehba2} S. Pott, B. Sehba \emph{The Multiplier Algebra on product $\BMO$}. Preprint.

\bibitem{ps} S.~Pott, C.~Sadosky, \emph{Bounded mean oscillation on the bidisk and operator BMO},
  J.~Funct.~Anal.~189(2002),  no.~2, 475--495.

\bibitem{benoit}
\textsc{B.~F.~Sehba}, \emph{Operators on some analytic function spaces and their dyadic counterparts}, Ph.D 2009, University of Glasgow.

\bibitem{stegenga} D.~A.~Stegenga, \emph{Bounded Toeplitz operators on $H\sp{1}$ and applications of the duality between
 $H\sp{1}$ and the functions of bounded mean oscillation}, Amer.~J.~Math.  98  (1976), no. 3, 573--589

 \bibitem{treil} S.~Treil, \emph{$H^1$ and dyadic $H^1$}. Linear and complex analysis, 179Ð193, Amer. Math. Soc. Transl. Ser. 2, 226, Amer. Math. Soc., Providence, RI, 2009.


\bibitem{zhao} R.~Zhao, \emph{On logarithmic Carleson measures}, Acta Sci.~Math (Szeged) 69 (2003), no 3-4, 605--618
\end{thebibliography}

\end{document}